\documentclass[10pt]{amsart}

\usepackage{amssymb,amsfonts}
\usepackage[all]{xy}
\usepackage[shortlabels]{enumitem}

\newcommand{\Z}{\ensuremath{\mathbb{Z}}}

\newcommand{\field}{\Bbbk}

\DeclareMathOperator{\Aut}{Aut}

\DeclareMathOperator{\op}{op}
\DeclareMathOperator{\supp}{supp}

\theoremstyle{plain}
\newtheorem{theorem}{Theorem}[section]
\newtheorem{proposition}[theorem]{Proposition}
\newtheorem{lemma}[theorem]{Lemma}
\newtheorem{corollary}[theorem]{Corollary}

\theoremstyle{remark}

\newtheorem{remark}[theorem]{Remark}
\newtheorem{problem}[theorem]{Problem}

\begin{document}

\title[Free group algebras in division rings]{Free symmetric group algebras in division rings 
generated by poly-orderable groups}

\author[V. O. Ferreira]{Vitor O. Ferreira}
\address{Department of Mathematics - IME, University of S\~ao Paulo,
Caixa Postal 66281, S\~ao Paulo, SP, 05314-970, Brazil}
\email{vofer@ime.usp.br}
\thanks{The first and second authors were partially supported by
Fapesp-Brazil Proc.~2009/52665-0.}

\author[J. Z. Gon\c calves]{Jairo Z. Gon\c calves}
\address{Department of Mathematics - IME, University of S\~ao Paulo,
Caixa Postal 66281, S\~ao Paulo, SP, 05314-970, Brazil}
\email{jz.goncalves@usp.br}
\thanks{The second author was partially supported by CNPq-Brazil Grant 300.128/2008-8.}

\author[J. S\'{a}nchez]{Javier S\'{a}nchez}
\address{Department of Mathematics - IME, University of S\~ao Paulo,
Caixa Postal 66281, S\~ao Paulo, SP, 05314-970, Brazil}
\email{jsanchez@ime.usp.br}
\thanks{The third author was supported by Fapesp-Brazil Proc.~2009/50886-0.}

\subjclass[2010]{Primary 16K40, 16S35, 16W10; Secondary 16S10, 20F60}

\keywords{Infinite dimensional division rings, division
rings with involution, free associative algebras, ordered groups}

\date{26 June 2012}

\begin{abstract}
  We show that the canonical involution on a nonabelian
  poly-orderable group $G$ extends to the Hughes-free 
  division ring of fractions $D$ of the group algebra $\field[G]$
  of $G$ over a field $\field$ and that, with respect
  to this involution, $D$ contains a pair of symmetric elements
  freely generating a free group subalgebra of $D$ over $\field$.
\end{abstract}

\maketitle

\section*{Introduction}

A long-standing conjecture of Makar-Limanov states that a division
ring which is finitely generated (as a division ring) and infinite-dimensional 
over its center contains a free algebra of rank $2$ over the center \cite{lM84}. 

This question has been extensively investigated, see \textit{e.g.}~\cite{GS12}, and 
recently an interesting advance appeared in the context of division rings generated
by a group algebra of an ordered group; namely, in \cite{jSpp} S\'anchez proved that
if $K$ is a division ring, $G$ is a nonabelian ordered group and $K(G)$ is
the subdivision ring of the Malcev-Neumann series division ring $K((G))$ generated
by the group ring $K[G]$, then $K(G)$ contains a free group algebra of rank $2$ over
its center.

\medskip

In the presence of an involution, a natural question would be whether a division
ring satisfying Makar-Limanov's conjecture contains a free algebra of rank $2$
generated by symmetric elements. 

An analogous problem was considered in \cite{GS12}, 
where the authors looked for a symmetric pair generating a free subgroup in
the multiplicative group of a division ring endowed with an involution.

\medskip

In order to state the main contribution of the present paper, we shall introduce
some definitions and notation.

In what follows, $\field$ will denote a field. Given a $\field$-algebra $A$, by 
a \emph{$\field$-involution} on $A$ one understands a $\field$-linear map 
${}^{\star}\colon A\to A$ satisfying
\begin{enumerate}[(i)]
\item $(ab)^{\star}=b^{\star}a^{\star}$, for all $a,b\in A$, and
\item $(a^{\star})^{\star}=a$, for all $a\in A$.
\end{enumerate}
An element $a\in A$ is said to be \emph{symmetric} if $a^{\star}=a$.

If $G$ is a group and $\field[G]$ denotes the group algebra of $G$ over
$\field$, then the map
$$
\begin{array}{rcl}
\field[G] & \longrightarrow & \field[G]\\
\sum_{x\in G}xa_x & \longmapsto & \sum_{x\in G}x^{-1}a_x,
\end{array}
$$
where for all $x\in G$, $a_x$ are elements of $\field$ all but
a finite number of which nonzero, is a $\field$-involution $\field[G]$, henceforth 
called the \emph{canonical involution} of $\field[G]$.

If $G$ is ordered, we let $\field((G))$ denote the division ring of Malcev-Neumann 
series of $G$ over $\field$, and let $\field(G)$ be the subdivision ring of
$\field((G))$ generated by $\field[G]$. The main aim of this paper is to present a 
proof that if $G$ is nonabelian, then the canonical involution on $\field[G]$ extends to a $\field$-involution ${}^{\star}$ on $\field(G)$ and $\field(G)$ contains 
a group $\field$-algebra of a nonabelian free group generated by symmetric elements 
with respect to ${}^{\star}$. Moreover, a pair of free symmetric generators
is explicitly constructed.

\medskip

In Section~\ref{sec:nilp}, we approach the case of a torsion-free nilpotent group of 
class $2$ generated by $2$ elements. This case is treated separately for two reasons. 
First, it is a step in the proof of the general case and, second,
it has interest in itself for, in this case, the group algebra
$\field[G]$ is a noetherian domain and, therefore, has a unique
field of fractions, regardless of it being embeddable in a
Malcev-Neumann series ring.

Section~\ref{sec:gen-ext} is devoted to a proof of the fact that
the canonical involution on $\field[G]$ can be extended to an involution on
$\field(G)$ for an arbitrary ordered group. To be more precise,
we consider the larger class of locally indicable groups which
have a crossed product with a Hughes-free division ring of fractions.

The existence of free symmetric generators of a free group algebra
inside $\field(G)$ for $G$ orderable is tackled in Section~\ref{sec:gen-free}.
The proof itself splits in two cases, depending on properties
of the ordering of $G$, according to the trichotomy described
in \cite{jSpp}. Here, again, we consider the more general
case of crossed products of locally indicable groups with a
Hughes-free division ring of fractions. The main result in
the paper is Theorem~\ref{teo:freegen} which states that a
class of groups including poly-orderable groups satisfies
Makar-Limanov's conjecture with respect to free group algebras
generated by symmetric elements.

Finally, in Section~\ref{sec:prob} we propose some questions and indicate possible 
developments springing from the ideas in the paper.

\medskip

In what follows, if $D$ is a division ring, which might be a field, we shall let
$D^{\times}$ denote the set of nonzero elements of $D$.

\section{The nilpotent case}\label{sec:nilp}

Let $G$ be the free nilpotent group of class $2$ generated by two elements, that
is, 
$$
G = \langle x,y : [[x,y],x]=[[x,y],y]=1\rangle,
$$
where $[x,y]=x^{-1}y^{-1}xy$. Let $\field$ be a field and let ${}^{\star}$ denote the
canonical $\field$-involution on the group algebra $\field[G]$ of $G$ over $\field$.

It is well known that $\field[G]$ is a noetherian domain and,
thus, ${}^{\star}$ extends to a $\field$-involution (still denoted
by ${}^{\star}$) on the Ore field of fractions $D$ of $\field[G]$.

Our aim in this section is to provide a proof of the following
fact.

\begin{theorem}\label{teo:1}
  Let $\field$ be a field and consider the group
  $G = \langle x,y : [[x,y],x]=[[x,y],y]=1\rangle$.
  Let $D$ denote the Ore field of fractions of the group algebra $\field[G]$.
  Then 
  $$
  1+y(1-y)^{-2}
  \quad\text{and}\quad
  1+ y(1-y)^{-2}x(1-x)^{-2}y(1-y)^{-2}
  $$
  are symmetric elements with respect to the canonical involution on $D$ and 
  freely generate a free group $\field$-algebra in $D$.
\end{theorem}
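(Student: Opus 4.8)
My plan is to treat the two assertions of the theorem separately: that the two displayed elements are symmetric, which is a short computation, and that they freely generate a free group $\field$-algebra, which is the substantial part.

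Put $a=y(1-y)^{-2}$ and $b=x(1-x)^{-2}$, so that the elements in question are $u:=1+a$ and $v:=1+aba$. For any $t\in D^{\times}$ with $t\ne 1$, the element $t$ commutes with $(1-t)^{-1}$, and from $(1-t^{-1})\big(-t(1-t)^{-1}\big)=1$ one gets $(1-t^{-1})^{-1}=-t(1-t)^{-1}$, hence $(1-t^{-1})^{-2}=t^{2}(1-t)^{-2}$. Applying the involution ${}^{\star}$ on $D$, which fixes $\field$ and inverts elements of $G$, gives $a^{\star}=\big((1-y)^{-2}\big)^{\star}y^{-1}=(1-y^{-1})^{-2}y^{-1}=y^{2}(1-y)^{-2}y^{-1}=y(1-y)^{-2}=a$ and likewise $b^{\star}=b$; hence $u=1+a$ is symmetric and, since $(aba)^{\star}=a^{\star}b^{\star}a^{\star}=aba$, so is $v=1+aba$.

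For the free group algebra assertion --- that is, that the homomorphism $\field[F]\to D$, $U\mapsto u$, $V\mapsto v$, from the group algebra of the free group $F$ on symbols $U,V$ is injective --- the plan is to place $D$ inside a series division ring in which leading terms can be controlled. As $G$ is torsion-free nilpotent it is orderable, so $\field[G]$, and hence $D$, embeds into a Malcev--Neumann ring $\field((G))$; equivalently, and in the form I will use, write $H=\langle y,z\rangle\cong\Z^{2}$, which is normal in $G$ with $G/H\cong\Z$ generated by the class of $x$, let $\tau$ be the automorphism of the field of fractions $\field(y,z)$ of $\field[H]$ induced by conjugation by $x$ (so $\tau(y)=yz$, $\tau(z)=z$, and $\tau$ has infinite order), and note $\field[G]=\field[H][x^{\pm 1};\tau]$, so that $D$ embeds into the skew Laurent series division ring $L=\field(y,z)((x;\tau))$. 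I will use the $x$-adic valuation $\nu\colon L^{\times}\to\Z$ together with the reduction of $L_{\ge 0}$ modulo its ideal $xL_{\ge 0}$, whose quotient is $\field(y,z)$. A short computation gives $a\in\field(y,z)^{\times}$, $b=\sum_{m\ge 1}mx^{m}$ and thus $aba=\sum_{m\ge 1}mx^{m}\tau^{-m}(a)a$; consequently $u$ and $u^{-1}$ lie in $\field(y,z)^{\times}$ with $u=(1-y+y^{2})(1-y)^{-2}$ \emph{transcendental over $\field$}, while $v$ and $v^{-1}$ satisfy $v\equiv v^{-1}\equiv 1\pmod{xL_{\ge 0}}$.

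Finally, let $0\ne p=\sum_{j}\lambda_{j}W_{j}\in\field[F]$ with $\lambda_{j}\in\field^{\times}$ and the $W_{j}$ distinct reduced words, and set $\pi=\sum_{j}\lambda_{j}W_{j}(u,v)\in L_{\ge 0}$; I must show $\pi\ne 0$. Reducing modulo $xL_{\ge 0}$ and using $v^{\pm 1}\equiv 1$, each $W_{j}(u,v)$ becomes $u^{d(W_{j})}$, where $d(W_{j})\in\Z$ is the total signed $U$-exponent of $W_{j}$; since $u$ is transcendental over $\field$, if $\sum_{j\,:\,d(W_{j})=d}\lambda_{j}\ne 0$ for some $d$ then $\pi\not\equiv 0\pmod{xL_{\ge 0}}$ and we are done. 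Otherwise $\pi\in xL_{\ge 0}$ and one passes to deeper levels of the $x$-adic filtration: the coefficient of $x^{n}$ in $\pi$ is a $\field$-linear combination of products of $\tau$-twists of $a$ with powers of $u$, and one argues by induction on $n$ that these cannot all vanish, the key input being transcendence over $\field$ of the pertinent rational functions (for instance of $\tau^{-1}(u)\,u^{-1}\in\field(y,z)$, a nonconstant function that governs the conjugates $u^{n}vu^{-n}$). \textbf{The main obstacle is precisely this cancellation analysis}: every positive word $W_{j}(u,v)$ is congruent to a power of $u$ modulo $xL_{\ge 0}$, so no single valuation separates the words, and one must descend through the whole filtration while tracking how the twisted powers of $u$ interact with the twisted copies of $a$; making this bookkeeping uniform rather than level-by-level is the technical heart of the nilpotent case, and is where I expect the real difficulty to lie.
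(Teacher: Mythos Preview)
Your symmetry calculation is fine. The free-group-algebra part, however, is not a proof: you correctly locate the obstacle (the cancellation analysis as one descends the $x$-adic filtration) and then explicitly leave it open. Writing ``one argues by induction on $n$'' with no invariant and no reason why the twisted copies $u,\tau^{-1}(u),\tau^{-2}(u),\dots$ and $a,\tau^{-1}(a),\dots$ cannot conspire to cancel is precisely the missing idea. Once the $U$-degree collapse kills level~$0$, the higher $x$-coefficients mix these rational functions according to the shape of arbitrary reduced words in $F$, and separating them requires a genuine transcendence\slash independence statement in $\field(y,z)$ that you have not supplied. As you yourself say, this is ``the technical heart''; a proposal that stops exactly there is a plan, not a proof.

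The paper sidesteps the filtration bookkeeping entirely. It first changes coordinates: via $\lambda\mapsto[x,y]$, $t\mapsto x$, $X\mapsto y$, the ring $D$ becomes the Ore quotient $Q$ of $\field(\lambda)(t)[X;\sigma]$ with $t^{\sigma}=\lambda t$. It then invokes a criterion of Bell--Rogalski: if $b\in F\setminus F_{\sigma}$ has the property that $f^{\sigma}-f\in F_{\sigma}+F_{\sigma}b$ forces $f\in F_{\sigma}$, then $(1-X)^{-1}$ and $b(1-X)^{-1}$ freely generate a free subalgebra of $Q$. The hypothesis is checked for $b=t(1-t)^{-2}$ by a short Lorenz-style pole argument (the point being that $b$ has a single nonzero pole and $\lambda$ is not a root of unity). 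From the free algebra on $\alpha=(1-X)^{-1}$ and $\beta\alpha$ one passes, via a standard substitution, to the free algebra on the symmetric pair $\gamma=\alpha^{2}-\alpha=X(1-X)^{-2}$ and $\delta=\gamma\beta\gamma$. Finally, in the $X$-adic valuation one has $\nu(\gamma)=1$, $\nu(\delta)=2$, and Lichtman's theorem upgrades $1+\gamma$, $1+\delta$ to free \emph{group} algebra generators. Note that the paper's valuation is in the $y$-direction; your choice of the $x$-adic filtration puts $a$ at valuation~$0$, which is exactly why no single valuation level separates your words and why you are forced into the unfinished level-by-level descent.
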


Let $\field(\lambda)$ denote the field of rational functions in the
indeterminate $\lambda$ over $\field$ and let $K=\field(\lambda)(t)$ be
the field of rational functions in the indeterminate $t$ over $\field(\lambda)$.
It is known that $D$ is $\field$-isomorphic to the Ore field of fractions $Q$ of the skew 
polynomial ring $K[X;\sigma]=\bigl\{\sum_{i\geq 0}X^ia_i : a_i\in K \text{ all but a finite
number nonzero}\bigr\}$, 
where $\sigma$ is the $\field(\lambda)$-automorphism of $K$ satisfying 
$t^{\sigma}=\lambda t$ and $aX=Xa^{\sigma}$, for all $a\in K$. An explicit isomorphism
$Q\to D$ is given by $\lambda\mapsto [x,y], t\mapsto x, X\mapsto y$.

The main ingredient in the proof of Theorem~\ref{teo:1} is the following
proposition, whose proof is given below, after the proof of
Theorem~\ref{teo:1}.

\begin{proposition}\label{prop:2}
  Let $\field$ be a field and let $Q$ denote the Ore field of fractions of
  the skew polynomial ring $K[X;\sigma]$, where $K=\field(\lambda)(t)$ and
  $\sigma$ is the $\field(\lambda)$-automorphism of $K$ such that 
  $t^{\sigma}=\lambda t$. Then the elements $(1-X)^{-1}$ and
  $t(1-t)^{-2}(1-X)^{-1}$ freely generate a free $\field$-subalgebra of $Q$.
\end{proposition}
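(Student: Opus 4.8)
The plan is to transfer the problem, by means of a valuation, into a linear‑independence statement about explicit rational functions. First I would exploit the fact that $\sigma$ preserves the $t$‑adic valuation of $K$. Let $v$ be that valuation: it is trivial on $\field(\lambda)$, satisfies $v(t)=1$, and has residue field $\field(\lambda)$. Since $\sigma$ fixes $\field(\lambda)$ and $v(t^{\sigma})=v(\lambda t)=v(t)$, the automorphism $\sigma$ preserves $v$ and induces the identity on the residue field, so the rule $w\bigl(\sum_i c_iX^i\bigr)=\min_i v(c_i)$ on $K[X;\sigma]$ has for its associated residue ring the \emph{ordinary} polynomial ring $\field(\lambda)[\overline X]$. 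This ring is a domain, so leading forms of products cannot cancel; hence $w$ is a valuation on $K[X;\sigma]$, and it extends uniquely to a valuation on $Q$ with value group $\Z$, residue field $L:=\field(\lambda)(\overline X)$, and associated graded ring $\mathrm{gr}_w Q\cong L[\overline t^{\,\pm1};\tau]$, a skew Laurent polynomial ring in which $\overline t\,\overline X=\lambda\,\overline X\,\overline t$ (read off from $tX=\lambda Xt$). As $\mathrm{gr}_w Q$ is again a domain, passing to leading forms is multiplicative on $Q^{\times}$.

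Next I would compute leading forms. Writing $a=(1-X)^{-1}$ and $b=t(1-t)^{-2}(1-X)^{-1}$, one has $w(a)=0$ with leading form $\overline a=(1-\overline X)^{-1}$, and $w(b)=1$ with leading form $\overline b=\overline t\,(1-\overline X)^{-1}$. Let $\Phi\colon\field\langle\alpha,\beta\rangle\to Q$ be the $\field$‑algebra map with $\alpha\mapsto a$, $\beta\mapsto b$; the goal is to show $\Phi$ is injective. For a monomial $m=\alpha^{e_0}\beta\alpha^{e_1}\cdots\beta\alpha^{e_k}$ with all $e_i\ge0$ (so $\deg_\beta m=k$) one gets $w(\Phi(m))=k$, and commuting the $k$ copies of $\overline t$ to the left in $\mathrm{gr}_w Q$ gives the leading form
\[
\overline{\Phi(m)}=\overline t^{\,k}\,R_m(\overline X),\qquad
R_m(\overline X)=\prod_{j=0}^{k}\bigl(1-\lambda^{\,j-k}\overline X\bigr)^{-f_j},
\]
with $f_0=e_0$ and $f_j=1+e_j$ for $1\le j\le k$. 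Given a nonzero $P\in\field\langle\alpha,\beta\rangle$, decompose $P=\sum_k P_k$ by $\beta$‑degree and let $k_0$ be least with $P_{k_0}\ne0$; then $w(\Phi(P_k))\ge k>k_0$ for $k>k_0$, while the degree‑$k_0$ component of the leading form of $\Phi(P_{k_0})$ is $\overline t^{\,k_0}\sum_m\gamma_m R_m$, where the $\gamma_m\in\field$ are the coefficients of $P_{k_0}$. So $\Phi(P)\ne0$, and the proposition follows, once one knows: for each $k\ge0$ the rational functions $R_m\in L$, indexed by the monomials $m$ of $\beta$‑degree $k$ (equivalently by $(f_0,\dots,f_k)\in\Z_{\ge0}\times\Z_{\ge1}^{k}$), are linearly independent over $\field$.

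Establishing this last claim is the step I expect to be the main obstacle, and it is here that the transcendence of $\lambda$ over $\field$ is used essentially: $R_m$ has poles exactly at the $k+1$ pairwise distinct points $\overline X=\lambda^{k},\lambda^{k-1},\dots,\lambda,1$, the pole at $\overline X=\lambda^{k-j}$ having order $f_j$ (for $f_j\ge1$). I would attack it by induction on $k$: localizing a putative relation $\sum_m\gamma_m R_m=0$ at the pole $\overline X=\lambda^{k}$ and comparing coefficients of the most negative power of $\overline X-\lambda^{k}$ produces a relation $\sum_{m:\,f_0=N}\gamma_m\prod_{j=1}^{k}(1-\lambda^{j})^{-f_j}=0$ among monomials in $1-\lambda,\dots,1-\lambda^{k}$; proving that \emph{these} monomials are $\field$‑linearly independent (again using that $\lambda$ is not a root of unity) forces $\gamma_m=0$ for all $m$ with $f_0=N$, and one then descends on $N$ and on $k$. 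A minor point to watch is that $\charac\field$ may be positive, so the pole/Taylor‑coefficient bookkeeping should be phrased with Hasse (divided) derivatives in place of ordinary ones; this is routine and does not affect the structure of the argument.
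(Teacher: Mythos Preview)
Your reduction via the $t$-adic valuation is sound, and the identification of the leading form of each monomial as $\overline{t}^{\,k}R_m(\overline X)$ is correct.  The paper's route is entirely different: it quotes the Bell--Rogalski criterion (if $b\in F\setminus F_\sigma$ satisfies ``$f^\sigma-f\in F_\sigma+F_\sigma b\Rightarrow f\in F_\sigma$'' then $(1-X)^{-1}$ and $b(1-X)^{-1}$ are free) and then verifies the hypothesis for $b=t(1-t)^{-2}$ with a short pole-set argument of Lorenz type.

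The gap in your argument is the final step.  You assert that the elements $\prod_{j=1}^{k}(1-\lambda^{j})^{-f_j}$, for $(f_1,\dots,f_k)\in\Z_{\ge 1}^{k}$, are $\field$-linearly independent in $\field(\lambda)$.  This is false already for $k=2$, in every characteristic.  In characteristic~$2$ it fails transparently, since $1-\lambda^{2}=(1-\lambda)^{2}$ gives $(1-\lambda)^{-1}(1-\lambda^{2})^{-2}=(1-\lambda)^{-5}=(1-\lambda)^{-3}(1-\lambda^{2})^{-1}$.  In characteristic~$0$ it fails by a dimension count: writing $1-\lambda^{2}=(1-\lambda)(1+\lambda)$, each product equals $(1-\lambda)^{-(f_1+f_2)}(1+\lambda)^{-f_2}$, a proper rational function with poles only at $\pm 1$; partial fractions over $\field$ place it in the $\field$-span of $(1-\lambda)^{-i}$ ($1\le i\le f_1+f_2$) and $(1+\lambda)^{-j}$ ($1\le j\le f_2$).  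Restricting to $f_1,f_2\in\{1,\dots,N\}$ yields $N^{2}$ products inside a $\field$-space of dimension at most $3N$, so a relation exists once $N\ge 4$.  Thus the residues you extract at $\overline X=\lambda^{k}$ do not separate the $R_m$'s, and your induction collapses.  The $\field$-linear independence of the $R_m(\overline X)$ themselves may well hold (it would suffice for your strategy), but it requires an argument that keeps $\overline X$ as a genuine second variable rather than specialising it away; absent that, the Bell--Rogalski/Lorenz approach of the paper is the path that actually closes.
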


\begin{proof}[Proof of Theorem~\ref{teo:1}]
  We work in the field of fractions $Q$ of the skew polynomial ring
  $K[X;\sigma]$, with the notation introduced in the paragraph preceding
  the statement of Proposition~\ref{prop:2}. In particular, the 
  $\field$-involution ${}^{\star}$ of $D$ extending the canonical involution
  of $\field[G]$ defines a $\field$-involution ${}^{\star}$ on $Q$ satisfying
  $\lambda^{\star}=\lambda^{-1}$, $t^{\star}=t^{-1}$ and $X^{\star}=X^{-1}$.
  
  Set $\alpha = (1-X)^{-1}$ and $\beta = t(1-t)^{-2}$.
  By Proposition~\ref{prop:2}, $\alpha$ and $\beta\alpha$ freely generate a free
  $\field$-subalgebra $A$ of $Q$. 
  Clearly, $\alpha^2-\alpha, \beta(\alpha^2-\alpha)\in A$ and they
  do not commute. It then follows from \cite[Corollary~6.7.4]{pC06} that $\alpha^2-\alpha$
  and $\beta(\alpha^2-\alpha)$ freely generate a free $\field$-subalgebra of $A$ and,
  thus, of $Q$.
  Therefore, $\gamma=\alpha^2-\alpha$ and $\delta=(\alpha^2-\alpha)\beta(\alpha^2-\alpha)$ freely
  generate a free $\field$-subalgebra of $Q$ too.
  But $\gamma = X(1-X)^{-2}$ and so $\gamma^{\star}=\gamma$.
  Also $\beta^{\star}=\beta$ and this implies that $\gamma,\delta$
  form a pair of symmetric free generators
  of a free $\field$-subalgebra of $Q$. Now consider the $X$-adic valuation
  on $Q$, that is, the valuation $\nu$ in $Q$ which is zero in $K$ and
  $\nu(X)=1$. We have $\nu(\gamma)=1$ and $\nu(\delta)=2$.
  By \cite[Corollary~1 on p.~524]{aL84}, $1+\gamma$ and $1+\delta$
  is a pair of symmetric elements in $Q$ which freely generates a
  free group $\field$-algebra in $Q$.
  
  Finally, one uses the isomorphism between $Q$ and $D$ to obtain
  the desired result.
\end{proof}

We now proceed to a proof of Proposition~\ref{prop:2} using
a criterion of Bell and Rogalski \cite{BRpp} for a pair of elements
to generate a free subalgebra in the field of fractions of a
skew polynomial ring.

Let $k$ be a field and let $F$ be an extension field of $k$.
Given $\sigma\in\Aut_kF$,
let $F_{\sigma}=\{f\in F : f^{\sigma}=f\}$ denote the fixed subfield
of $F$ with respect to $\sigma$. We shall make use of the following
simplified form of the result of Bell and Rogalski.

\begin{theorem}[{\cite[Theorem~2.2]{BRpp}}]\label{teo:bell}
  Let $F/k$ be a field extension and let $\sigma\in\Aut_kF$.
  Denote by $Q$ the Ore field of fractions of the skew
  polynomial ring $F[X;\sigma]$.
  Let $b\in F\setminus F_{\sigma}$ be such that for all $f\in F$,
  $f^{\sigma}-f\in  F_{\sigma}+F_{\sigma}b$ implies $f\in F_{\sigma}$.
  Then the elements $(1-X)^{-1}$ and $b(1-X)^{-1}$ freely generate
  a free $k$-subalgebra of $Q$. \qed
\end{theorem}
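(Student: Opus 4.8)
The plan is to prove directly that the monomials in $u=(1-X)^{-1}$ and $v=b(1-X)^{-1}=bu$ are $k$-linearly independent, using a skew power series expansion. Both $u$ and $v$ lie in the skew power series ring $R=F[[X;\sigma]]$, into whose fraction field $Q$ embeds over $F[X;\sigma]$, so it suffices to prove independence in $R$. Since $X$ commutes with $1-X$ one has $u=\sum_{i\ge 0}X^{i}$, and since $bX^{i}=X^{i}b^{\sigma^{i}}$ one gets $v=\sum_{i\ge 0}X^{i}b^{\sigma^{i}}$. Thus each word $w$ in $u,v$ expands as $\sum_{n\ge 0}X^{n}c_{n}(w)$ with $c_{n}(w)\in F$, and a relation $\sum_{w}a_{w}w=0$ with $a_{w}\in k$ holds if and only if $\sum_{w}a_{w}c_{n}(w)=0$ in $F$ for every $n$. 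Freeness is therefore equivalent to the $k$-linear independence of the coefficient sequences $\bigl(c_{n}(w)\bigr)_{n\ge 0}$ as $w$ runs over all words.

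The next step is to encode these sequences recursively. From $u=1+Xu$ and $v=b+Xb^{\sigma}u$ one computes, for any word $w'$, that $c_{n}(uw')=\sum_{j=0}^{n}c_{j}(w')$ and $c_{n}(vw')=b^{\sigma^{n}}c_{n}(uw')$. In other words, prefixing a letter $u$ applies the prefix-sum operator $S$ to the coefficient sequence, while prefixing $v$ applies $B\circ S$, where $B$ multiplies the $n$-th entry by $b^{\sigma^{n}}$; the empty word corresponds to the sequence $e=(\delta_{n,0})_{n}$. Hence every word $w=z_{1}\cdots z_{m}$ yields the sequence $\Phi_{z_{1}}\cdots\Phi_{z_{m}}(e)$ with $\Phi_{u}=S$ and $\Phi_{v}=BS$, and the problem becomes showing that distinct composites of $S$ and $BS$ send $e$ to $k$-independent sequences. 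I would also record the hypothesis in additive form: writing $\Delta(f)=f^{\sigma}-f$, it says exactly that $\im\Delta\cap(F_{\sigma}+F_{\sigma}b)=\{0\}$, and in particular that $1$ and $b$ are linearly independent over $F_{\sigma}$.

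To finish I would argue by contradiction with a nontrivial relation chosen minimal, first in the maximal word length $N$ that occurs and then in the number of length-$N$ words. Applying the difference operator $D=S^{-1}$ strips the outer $S$ from every term: words beginning with $u$ lose their first letter, while words beginning with $v$ become $DB(\cdots)$; regrouping by the common $u$-tails and reading off a single well-chosen index $n$, I expect to isolate one element $f\in F$, assembled from the scalars $a_{w}$ and the relevant powers $b^{\sigma^{j}}$, whose $\sigma$-difference $\Delta(f)$ is forced to lie in $F_{\sigma}+F_{\sigma}b$. The hypothesis then gives $\Delta(f)=0$, which annihilates the length-$N$ contribution and contradicts minimality; so no nontrivial relation exists, and as $u,v\in Q$ they freely generate a free $k$-subalgebra of $Q$.

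The main obstacle is precisely this last reduction. At first order the data are degenerate---both $u-1$ and $v-b$ have leading term a multiple of $X$, so the associated graded of $R$ detects no freeness---and the argument must therefore extract higher-degree information while tracking which power of $\sigma$ is applied to each factor of $b$ as words are multiplied. The delicate point is to choose the ordering of terms and the evaluation index so that the surviving $F$-identity is genuinely of the form $\Delta(f)\in F_{\sigma}+F_{\sigma}b$, to which the hypothesis applies, rather than a more complicated relation it cannot control; making this reduction close the induction, instead of merely producing a longer relation, is the heart of the proof.
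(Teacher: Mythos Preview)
The paper does not prove this theorem at all: it is quoted from Bell and Rogalski \cite{BRpp} with a \qed immediately after the statement, and is used only as a black box in the proof of Proposition~\ref{prop:2}. So there is no ``paper's own proof'' to compare against; you are attempting something the authors deliberately outsourced.

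As for the proposal itself, the setup is sound. The power series expansions of $u$ and $v$ are correct, and your recursive description of the coefficient sequences---prefixing by $u$ applies the partial-sum operator $S$, prefixing by $v$ applies $BS$ where $B$ multiplies the $n$-th coordinate by $b^{\sigma^{n}}$---is exactly the right bookkeeping and is indeed close in spirit to how Bell and Rogalski organize their argument.

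The gap is that you have not actually carried out the induction. Your last two paragraphs describe a \emph{hope}: you would like to strip an outer $S$, regroup, read off a single index, and thereby produce some $f\in F$ with $f^{\sigma}-f\in F_{\sigma}+F_{\sigma}b$. But you never specify which $f$, which index, or how the regrouping goes, and you explicitly flag this as ``the heart of the proof'' and ``the main obstacle.'' That is an honest assessment, but it means the proposal is a plan rather than a proof. The difficulty is real: after applying $D=S^{-1}$, words beginning with $v$ produce terms of the form $DBS(\cdots)$, and $DB$ does not simplify nicely; one has $(DBSc)_{n}=b^{\sigma^{n}}(Sc)_{n}-b^{\sigma^{n-1}}(Sc)_{n-1}$, which mixes two consecutive partial sums with two different twists of $b$. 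Showing that a minimal relation forces a specific element into $\ker\Delta$ modulo $F_{\sigma}+F_{\sigma}b$ requires a careful choice of which coefficient equations to combine, and you have not made that choice. Until that step is written out, the argument is incomplete.
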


We shall apply the above criterion to the case where $F$ is a field
of rational functions over $k$ and in order to verify the hypothesis
of Theorem~\ref{teo:bell}, we shall use the notion of a pole of a
rational function. So let $k(t)$ denote the field of rational functions
over $k$ on the indeterminate $t$ and let $\bar{k}$ denote the
algebraic closure of $k$. We add an element $\infty$ to $\bar{k}$ and
regard the elements of $k(t)$ as functions on $\bar{k}\cup\{\infty\}$ with 
the following conventions: given $h\in k(t)$, say $h=\frac{p}{q}$, with
$p,q\in k[t]$ and $\gcd(p,q)=1$, we set $h(r) = \infty$, for all $r\in\bar{k}$ 
such that $q(r)=0$, and 
$$
h(\infty) = \begin{cases}
  \infty & \text{if $\deg(p)>\deg(q)$,}\\
  0 & \text{if $\deg(p)<\deg(q)$,}\\
  ab^{-1} & \text{if $\deg(p)=\deg(q)$,}
  \end{cases}
$$
where $a$ stands for the leading term of $p$ and $b$ for the
leading term of $q$. We say that an element $r\in\bar{k}\cup\{\infty\}$ is a pole
of $h\in k(t)$ if $h(r)=\infty$. Given $h\in k(t)$ we shall let
$S_h$ denote the set of poles of $h$. Then, clearly, $S_h$ is a
finite set which is nonempty whenever $h\not\in k$. Also,
$h\in k[t]\setminus k$ if and only if $S_h=\{\infty\}$. Note,
finally, that if $\sigma\in\Aut_kk(t)$ and $t^{\sigma}=h$, then for all $f\in k(t)$,
we have $S_f=h(S_{f^\sigma})$.

The following lemma is based on an argument of Lorenz (see \cite[Lemma~1]{mL86}).

\begin{lemma}\label{le:lorenz}
  Let $k$ be a field, let $\lambda\in k^{\times}$ be an element which is 
  not a root of unity and let $\sigma\in\Aut_kk(t)$ be the automorphism
  such that $t^{\sigma}=\lambda t$. Let $g\in k(t)\setminus k[t]$ be 
  a rational function which has a unique pole and this pole is nonzero. 
  If $f\in k(t)$ satisfies $f^{\sigma}-f\in k+kg$, then $f\in k$.
\end{lemma}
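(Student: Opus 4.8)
The plan is to exploit the action of $\sigma$ on the set of poles $S_f$ together with the constraint imposed by $f^\sigma - f \in k + kg$. First I would dispose of trivial cases: if $f \in k[t]$, then $f^\sigma - f \in k[t]$, and since $g \notin k[t]$ has a genuine finite pole, membership in $k + kg$ forces the $kg$-coefficient to vanish, whence $f^\sigma - f \in k$; looking at the coefficient of $t$ in $f^\sigma - f = \sum_i (\lambda^i - 1) a_i t^i$ and using that $\lambda$ is not a root of unity gives $a_i = 0$ for all $i \geq 1$, so $f \in k$. Thus I may assume $f \notin k[t]$, so $f$ has at least one finite pole, and I want to derive a contradiction unless in fact $f$ is constant.

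Next I would analyze poles. Let $r_0$ be the unique (nonzero, finite) pole of $g$, so $S_g = \{r_0\}$ or $S_g = \{r_0, \infty\}$ — but since $g \notin k[t]$ the finite pole $r_0$ is present, and I should track whether $\infty$ is also a pole. The key observation is the relation $S_{f} = h(S_{f^\sigma})$ with $h(t) = \lambda t$ from the remark preceding the lemma; equivalently, applying $\sigma$ permutes finite nonzero poles by $r \mapsto \lambda^{-1} r$ (one must be careful about $0$ and $\infty$, which are fixed by the scaling). Since $\lambda$ is not a root of unity, the orbit of any finite nonzero pole of $f$ under this scaling is infinite. Now $f^\sigma$ and $f$ share essentially the same pole set up to this scaling, so $f^\sigma - f$ has a large pole set: concretely, if $r$ is a nonzero finite pole of $f$ not shared appropriately, then $f^\sigma - f$ has a pole there, and the scaled translates $\lambda r, \lambda^2 r, \dots$ also contribute poles. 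The point is that $f^\sigma - f$ can have many finite poles. But $f^\sigma - f \in k + kg$, and every nonzero element of $k + kg$ of the form $c + dg$ with $d \neq 0$ has pole set contained in $S_g = \{r_0\}$ (or $\{r_0,\infty\}$), which is tiny — a single finite pole. Comparing, $f^\sigma - f$ can have at most one finite pole (namely $r_0$), yet the $\sigma$-orbit structure forces infinitely many unless the finite poles of $f$ are arranged so that $f^\sigma$ and $f$ have identical finite-pole behavior, i.e. the nonzero finite poles of $f$ form a $\sigma$-invariant set — impossible for a nonempty finite set since the orbits are infinite. Hence $f$ has no nonzero finite pole; combined with the reduction above (ruling out $f \in k[t] \setminus k$, which handles the pole at $\infty$ and at $0$ separately via partial fractions at $t = 0$), we conclude $f \in k$.

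The main obstacle I anticipate is the bookkeeping around the exceptional points $0$ and $\infty$, which are not moved by the scaling $t \mapsto \lambda t$, so the clean ``infinite orbit'' argument does not directly apply there. To handle a possible pole of $f$ at $0$: write the partial fraction / Laurent expansion of $f$ at $t=0$ as $\sum_{i \geq 1} b_i t^{-i} + (\text{regular})$; then $f^\sigma$ has Laurent part $\sum_{i\geq 1} b_i \lambda^{-i} t^{-i}$, so $f^\sigma - f$ has Laurent part $\sum_{i\geq 1} b_i(\lambda^{-i} - 1) t^{-i}$ at $0$, which is forced to match that of $c + dg$ at $0$; but $g$ has its only pole at $r_0 \neq 0$, so $g$ is regular at $0$ and thus $f^\sigma - f$ must be regular at $0$, forcing $b_i(\lambda^{-i}-1) = 0$, hence $b_i = 0$ for all $i$ (again using $\lambda$ not a root of unity). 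So $0$ is not a pole of $f$. A pole of $f$ at $\infty$ means $f$ has polynomial part of positive degree, and that is exactly the case already reduced away. With $0$, $\infty$, and all nonzero finite points excluded as poles of $f$, the rational function $f$ has empty pole set, so $f \in k$, completing the proof.
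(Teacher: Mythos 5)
Your overall strategy is sound and close to the paper's (track pole sets under the scaling $r\mapsto\lambda^{-1}r$, treat $0$ and $\infty$ separately); the reduction for $f\in k[t]$ and the Laurent-expansion argument excluding a pole at $0$ are correct. But the central step, excluding nonzero finite poles of $f$, rests on a false claim: you assert that if the set $P$ of nonzero finite poles of $f$ is not invariant under the scaling, then $f^{\sigma}-f$ acquires poles at the scaled translates $\lambda r,\lambda^2 r,\dots$ and hence ``infinitely many'' finite poles. The function $f^{\sigma}-f$ is a single rational function with finitely many poles, and a pole of $f^{\sigma}-f$ at $r$ in no way propagates to $\lambda r$; the orbit of $r$ is an orbit of points of $\bar{k}$, not a set of poles of the fixed function $f^{\sigma}-f$. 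All that the hypothesis gives directly is that the symmetric difference of $S_f$ and $S_{f^{\sigma}}$ is contained in $S_{f^{\sigma}-f}\subseteq S_g=\{r_0\}$, and by itself this is compatible with $f^{\sigma}-f$ having exactly the one allowed pole $r_0$, i.e.\ with $P$ failing to be invariant at a single point. So the dichotomy ``either $P$ is invariant or $f^{\sigma}-f$ has many poles'' is precisely what needs proof, and the reason you offer for it is wrong.

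The missing ingredient is the paper's parity observation: since $r\mapsto\lambda^{-1}r$ is a bijection of $\bar{k}\cup\{\infty\}$, the sets $S_{f^{\sigma}}$ and $S_f$ have the same (finite) cardinality, so their symmetric difference has an even number of elements; being contained in the singleton $S_g$, it must be empty. Only then do you obtain an invariant pole set and may invoke the infinitude of the orbits (with $0$ and $\infty$ handled as you do). An equivalent repair without parity: in a $\lambda$-orbit meeting $P$, take the ``top'' element $r$ (so $\lambda r\notin P$) and the ``bottom'' element $u$ (so $\lambda^{-1}u\notin P$); then $r$ and $\lambda^{-1}u$ are two distinct poles of $f^{\sigma}-f$, contradicting the singleton bound. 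With either fix inserted, the rest of your plan goes through and is essentially the paper's proof, your treatment of the pole at $0$ being a slightly more direct local computation than the paper's case analysis.
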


\begin{proof}
  We start by remarking that, taking Laurent series representing
  the rational functions, for all $u\in k(t)$, $u^{\sigma}-u\in k$ implies
  that $u\in k$. So it is enough to show that $f^{\sigma}-f\in k$.

  Let $A=S_{f^{\sigma}}$ and $B=S_f$. If $t^{\sigma}=h$, then, as we 
  have seen, $B=h(A)$. Let $D=(A\cup B)\setminus (A\cap B)$ and
  let $C=S_{f^{\sigma}-f}$. Then
  $D$ is a subset of $C$ with an even number of elements, for
  $|A| = |B|$. Suppose that $f^{\sigma}-f= a+bg$ for
  some $a,b\in k$. Then $C=S_{a+bg}\subseteq S_g$. But, by hypothesis,
  $S_g$ contains just one element; it follows that $D=\emptyset$ and $B=A$.
  Thus, $B=h(B)=h^2(B)=\dots$. 
  If there existed $r\in B$, with $r\ne 0$ and 
  $r\ne \infty$, we would have $\{r,h(r),h^2(r),\dotsc\}\subseteq B$.
  Since $B$ is finite, this would imply that $\lambda^mr=h^m(r)=h^n(r)=\lambda^nr$, 
  for some $0\leq m<n$, which is impossible, because $\lambda$ is not a root
  of $1$. If either $B=\{0\}$ or $B=\{0,\infty\}$, we would have that
  $f=p/t^n$, for some $n\geq 1$ and some $p\in k[t]$ which is not a multiple
  of $t$. In this case, $f^{\sigma}-f=\big(\lambda^{-n}p^{\sigma}-p\big)/t^n$.
  Now the numerator $\lambda^{-n}p^{\sigma}-p$ does not vanish, because
  $t$ does not divide $p$ and $\lambda^n\ne 1$. So, $0$ would be a pole
  of $f^{\sigma}-f$, that is, we would have $0\in C\subseteq S_g$ and this
  would contradict the hypothesis on $g$.
  We are left, then, with two possibilities: either $B=\emptyset$ or $B=\{\infty\}$.
  In either case, $a+bg=f^{\sigma}-f\in k[t]$, and this can only be possible
  if $b=0$, for, by hypothesis, $g\not\in k[t]$. So $f^{\sigma}-f\in k$, which
  is what we needed to show.  
\end{proof}

\begin{theorem}
  Let $k$ be a field, let $\lambda\in k^{\times}$ be an element which is 
  not a root of unity and let $\sigma\in\Aut_kk(t)$ be the automorphism
  such that $t^{\sigma}=\lambda t$. Let $Q$ denote the Ore field of
  fractions of the skew polynomial ring $k(t)[X;\sigma]$. Then the
  elements $(1-X)^{-1}$ and $t(1-t)^{-2}(1-X)^{-1}$ freely generate a 
  free $k$-subalgebra in $Q$.
\end{theorem}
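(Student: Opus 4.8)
The plan is to apply the Bell--Rogalski criterion in the form of Theorem~\ref{teo:bell}, taking $F=k(t)$, the given automorphism $\sigma$, and $b=t(1-t)^{-2}$. The first step is to identify the fixed subfield $F_{\sigma}$: if $f\in k(t)$ satisfies $f^{\sigma}=f$, then comparing poles shows that the finite set $S_f$ is invariant under the map $r\mapsto\lambda^{-1}r$, and since $\lambda$ is not a root of unity this forces $S_f\subseteq\{0,\infty\}$, so that $f=ct^n$ with $c\in k$ and $n\in\Z$; then $f^{\sigma}=f$ gives $\lambda^n=1$, hence $n=0$ and $f\in k$. (Alternatively, this is immediate from the opening remark in the proof of Lemma~\ref{le:lorenz}.) Thus $F_{\sigma}=k$, and in particular $F_{\sigma}+F_{\sigma}b=k+kb$.

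Next I would check that $b=t(1-t)^{-2}$ satisfies the hypotheses of Lemma~\ref{le:lorenz} with $g=b$. Written in lowest terms as $t/(1-t)^2$, the numerator has degree $1$ and the denominator degree $2$, so $b(\infty)=0$ and the unique pole of $b$ is $r=1$. Hence $b$ has a single pole, this pole is nonzero, and $b\in k(t)\setminus k[t]$; in particular $b\in F\setminus F_{\sigma}$. Lemma~\ref{le:lorenz} then tells us that any $f\in k(t)$ with $f^{\sigma}-f\in k+kb=F_{\sigma}+F_{\sigma}b$ must lie in $k=F_{\sigma}$, which is precisely the hypothesis required in Theorem~\ref{teo:bell}.

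Applying Theorem~\ref{teo:bell} now yields that $(1-X)^{-1}$ and $b(1-X)^{-1}=t(1-t)^{-2}(1-X)^{-1}$ freely generate a free $k$-subalgebra of $Q$, as claimed. No genuine obstacle remains at this stage: the substance of the argument is contained in Lemma~\ref{le:lorenz}, and all that is needed here is the routine verification that $F_{\sigma}=k$ and that $b$ meets the pole conditions of that lemma.
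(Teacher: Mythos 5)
Your proposal is correct and follows essentially the same route as the paper: identify $F_{\sigma}=k$ (via the Laurent-series remark at the start of Lemma~\ref{le:lorenz}), verify that $b=t(1-t)^{-2}$ is a non-polynomial rational function whose unique pole (at $t=1$) is nonzero, and then invoke Lemma~\ref{le:lorenz} to meet the hypothesis of Theorem~\ref{teo:bell}. Your explicit check of the pole data and the alternative pole-invariance argument for $F_{\sigma}=k$ are fine additions but do not change the substance of the argument.
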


\begin{proof}
  We shall apply the criterion in Theorem~\ref{teo:bell}. So let $F=k(t)$.
  We have seen in the first paragraph of the proof of Lemma~\ref{le:lorenz}
  that, for all $u\in F$, $u^{\sigma}-u\in k$ implies $u\in k$. In particular,
  it follows that $F_{\sigma}=k$. The element $b=t(1-t)^{-2}\in F$ is a rational
  function which is not a polynomial and has a unique pole which is nonzero. So,
  by Lemma~\ref{le:lorenz}, the hypothesis of Theorem~\ref{teo:bell} are satisfied
  and $(1-X)^{-1}$ and $t(1-t)^{-2}(1-X)^{-1}$ freely generate a 
  free $k$-subalgebra in $Q$.
\end{proof}

To get Proposition~\ref{prop:2} from the theorem above, let $k=\field(\lambda)$
and note that since $(1-X)^{-1}$ and $t(1-t)^{-2}(1-X)^{-1}$ freely generate a 
free $k$-subalgebra in $Q$, they freely generate a free $\field$-subalgebra in $Q$.

We end this section with a direct consequence of Theorem~\ref{teo:1}.

\begin{corollary}
  Let $\field$ be a field and let $G$ be a nonabelian torsion-free nilpotent
  group. Then the Ore field of fractions of the group algebra $\field[G]$
  contains a free group $\field$-algebra of rank $2$ freely generated by
  symmetric elements with respect to the canonical involution.
\end{corollary}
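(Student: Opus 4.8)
The plan is to reduce the corollary to Theorem~\ref{teo:1} by producing inside $G$ a subgroup isomorphic to $\Gamma:=\langle x,y : [[x,y],x]=[[x,y],y]=1\rangle$ and then transporting the conclusion of Theorem~\ref{teo:1} along an embedding of Ore fields of fractions. To produce the subgroup, note that since $G$ is nilpotent and nonabelian its lower central series has $\gamma_n(G)\ne 1$ and $\gamma_{n+1}(G)=1$ for some $n\ge 2$; as $\gamma_n(G)=[\gamma_{n-1}(G),G]$ is nontrivial, there are $u\in\gamma_{n-1}(G)$ and $v\in G$ with $c:=[u,v]\ne 1$, and then $c\in\gamma_n(G)\subseteq Z(G)$. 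Set $H=\langle u,v\rangle$. Then $\langle c\rangle$ is central in $H$ and $H/\langle c\rangle$ is generated by the commuting images of $u$ and $v$, hence abelian, so that $[H,H]\subseteq\langle c\rangle\subseteq Z(H)$ and $H$ has nilpotency class at most $2$; moreover $H$ is nonabelian because $c\ne 1$, and torsion-free because $G$ is.

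Next I would check that $H\cong\Gamma$. As any group of nilpotency class at most $2$ generated by two elements is a quotient of $\Gamma$, there is a surjection $\varphi\colon\Gamma\to H$ with $\varphi(x)=u$ and $\varphi(y)=v$. Writing $z=[x,y]$, every element of $\Gamma$ is uniquely of the form $x^iy^jz^k$ with $(i,j,k)\in\Z^3$, where $z$ is central of infinite order, and $\varphi(x^iy^jz^k)=u^iv^jc^k$. Suppose $u^iv^jc^k=1$. Forming the commutator of both sides with $u$, and then with $v$, inside the class-$2$ group $H$ (so that $c$ is central throughout) yields $c^{-j}=1$ and $c^{i}=1$; since $H$ is torsion-free and $c\ne 1$, this forces $i=j=0$, whence $c^k=1$ and $k=0$. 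Thus $\varphi$ is injective and $H\cong\Gamma$.

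Now I would pass to division rings. The inclusion $H\le G$ induces an inclusion of group algebras $\field[H]\subseteq\field[G]$ that respects the canonical involutions, and $\field[H]$ is a noetherian domain; let $D_H$ be its Ore field of fractions, and let $D$ be the Ore field of fractions of $\field[G]$, carrying the involution ${}^{\star}$ that extends the canonical one. Since the composite $\field[H]\hookrightarrow\field[G]\hookrightarrow D$ sends every nonzero element of $\field[H]$ to a unit of $D$, the universal property of Ore localization yields a ring homomorphism $D_H\hookrightarrow D$, which is injective because $D_H$ is a division ring, and which is ${}^{\star}$-equivariant because the canonical involution of $D_H$ and the involution induced on $D_H$ from $D$ agree on $\field[H]$ and an involution on $D_H$ is determined by its restriction to $\field[H]$.

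Finally I would invoke Theorem~\ref{teo:1} for $H\cong\Gamma$: it provides $\alpha,\beta\in D_H$, symmetric for the canonical involution of $D_H$, that freely generate a free group $\field$-algebra in $D_H$. Their images under $D_H\hookrightarrow D$ are units of $D$ which are symmetric for ${}^{\star}$ and, the embedding being an injective ring homomorphism carrying inverses to inverses, still freely generate a free group $\field$-algebra, now in $D$; this is precisely the assertion of the corollary. The only step that is not purely formal is the group-theoretic reduction of the first two paragraphs, and there the delicate point is the use of torsion-freeness to exclude that the class-$2$ subgroup $H$ be a proper quotient of $\Gamma$ such as a finite Heisenberg group; the comparison of Ore fields of fractions and the bookkeeping with involutions are routine.
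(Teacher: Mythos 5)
Your proof is correct and follows essentially the same route as the paper: locate a two-generated subgroup $H$ of $G$ that is free nilpotent of class $2$, apply Theorem~\ref{teo:1} to it, and transfer the symmetric free generators into the Ore field of fractions of $\field[G]$ via the subfield generated by $\field[H]$. The only difference is that you spell out the verification (using torsion-freeness) that $H$ is indeed free nilpotent of class $2$, and the compatibility of the embedding of Ore fields with the canonical involutions, points the paper leaves implicit.
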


\begin{proof}
  Since $\field[G]$ is a noetherian domain, the Ore field of fractions $Q$
  of $\field[G]$ exists and the canonical involution of $\field[G]$ extends
  to a $\field$-involution on $Q$. Since $G$ is nilpotent and nonabelian, there
  exist $a,b\in G$ such that $[a,b]=c\ne 1$ and $[a,c]=[b,c]=1$. Then the
  subgroup $H$ of $G$ generated by $a$ and $b$ is a free nilpotent group
  of class $2$ generated by $2$ elements. By Theorem~\ref{teo:1}, the subdivision
  ring $D$ of $Q$ generated by $\field[N]$ contains a pair of symmetric elements
  with respect to the canonical involution on $\field[N]$, and, thus, with
  respect to the canonical involution on $\field[G]$, which freely generate
  a free group $\field$-algebra in $D\subseteq Q$.
\end{proof}

\section{The general case: extending the canonical involution}
\label{sec:gen-ext}

In this section we shall consider more general groups.
More precisely, the main result in the section,
Theorem~\ref{teo:extension}, will show that an involution
on a crossed product of a locally indicable group
with a Hughes-free division ring of fractions can
be uniquely extended to this division ring.

We must start by recalling the necessary definitions,
notation and basic results.
\subsection{General crossed products}

Given a group $G$ and a ring $R$, a \emph{crossed product}
of $G$ over $R$ is an associative ring $R\ast G$ containing
$R$ as a subring and such that $R\ast G$ is a free right
$R$-module with basis $\overline{G}=\{\bar{x} : x\in G\}$, a set in bijection
with $G$, with multiplication satisfying
\begin{equation}\label{eq:1}
\bar{x}\bar{y}=\overline{xy}\tau(x,y)
\quad\text{and}\quad
a\bar{x}=\bar{x}a^{\sigma(x)},
\end{equation}
for all $x,y\in G$ and $a\in R$, where $\tau\colon G\times G
\to U(R)$ and $\sigma\colon G\to\Aut(R)$ are maps and $U(R)$
denotes the group of units of $R$. We can always assume that
the unity element of $K\ast G$ is $\bar{1}$.
The set $\{\bar{x}a : a\in U(R), x\in G\}$
is a subgroup of the group of units of $R\ast G$, and its
elements are called \emph{trivial units} of $R\ast G$.
(We refer to \cite{dP89} for general facts about
crossed products.)

Since we shall be looking at involutions on crossed
products, the following fact will come in handy.

\begin{lemma}\label{le:oppcp}
  Let $R$ be a ring, let $G$ be a group and let $R\ast G$ be a
  crossed product of $G$ over $R$. Then $(R\ast G)^{\op}$ is
  a crossed product of the opposite group $G^{\op}$ over the
  opposite ring $R^{\op}$.
\end{lemma}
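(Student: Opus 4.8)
The plan is to exhibit explicitly the crossed-product structure on $(R\ast G)^{\op}$. As a set, $(R\ast G)^{\op}$ coincides with $R\ast G$, and it contains $R^{\op}$ as a subring (the opposite ring of the subring $R$). First I would check that, as a right $R^{\op}$-module, $(R\ast G)^{\op}$ is free with basis $\overline{G}=\{\bar x:x\in G\}$: indeed, right multiplication by $a\in R^{\op}$ in the opposite ring is left multiplication by $a$ in $R\ast G$, and the relation $a\bar x=\bar x a^{\sigma(x)}$ from \eqref{eq:1} shows that $a\bar x$ is again an $R$-linear (hence $R^{\op}$-linear from the right) combination of the $\bar x$; so $\{\bar x\}_{x\in G}$ spans, and $R$-independence on the left in $R\ast G$ is the same as $R^{\op}$-independence on the right in $(R\ast G)^{\op}$. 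Thus $(R\ast G)^{\op}$ is a free right $R^{\op}$-module with basis $\overline{G^{\op}}$ (relabelling $\overline{G}$ by the underlying set of $G^{\op}$).

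Next I would produce the structure maps. Denote by $\cdot$ the multiplication in $(R\ast G)^{\op}$, so $u\cdot v=vu$ where juxtaposition is the product in $R\ast G$. For $x,y\in G$, write $x\cdot^{\op}y=yx$ for the product in $G^{\op}$. Then
\[
\bar x\cdot\bar y=\bar y\bar x=\overline{yx}\,\tau(y,x)=\overline{x\cdot^{\op}y}\,\tau^{\op}(x,y),
\]
so the twisting map of $(R\ast G)^{\op}$ is $\tau^{\op}(x,y)=\tau(y,x)\in U(R)=U(R^{\op})$. For the action, given $a\in R^{\op}$ and $x\in G$,
\[
a\cdot\bar x=\bar x a=\overline{x}\,(a^{\sigma(x)})\quad\text{computed in }R\ast G;
\]
but in $(R\ast G)^{\op}$ this must read $a\cdot\bar x=\bar x\cdot a^{\sigma^{\op}(x)}=a^{\sigma^{\op}(x)}\bar x$ (product in $R\ast G$), which by \eqref{eq:1} equals $\bar x\,(a^{\sigma^{\op}(x)})^{\sigma(x)}$. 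Matching the two expressions forces $(a^{\sigma^{\op}(x)})^{\sigma(x)}=a$, i.e. $\sigma^{\op}(x)=\sigma(x)^{-1}$ viewed as an element of $\Aut(R^{\op})=\Aut(R)^{\op}$; equivalently one sets $\sigma^{\op}(x)\in\Aut(R^{\op})$ to be the automorphism induced on $R^{\op}$ by $\sigma(x)^{-1}$. One then verifies that $\sigma^{\op}\colon G^{\op}\to\Aut(R^{\op})$ and $\tau^{\op}\colon G^{\op}\times G^{\op}\to U(R^{\op})$ satisfy the compatibility conditions required of crossed-product data; this is a routine diagram-chase using associativity of $R\ast G$ and is where the bulk of the (elementary) bookkeeping lies.

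I do not expect a genuine obstacle here: the content is purely formal, amounting to transporting the crossed-product axioms through the order-reversing bijection. The one point that needs care — and hence the ``main obstacle'' such as it is — is getting the inversions right in the action: the map $\sigma\colon G\to\Aut(R)$ need not be a homomorphism (only a homomorphism up to inner automorphisms coming from $\tau$), so one must phrase ``$\sigma^{\op}(x)$ is induced by $\sigma(x)^{-1}$'' carefully and check it is consistent with $\tau^{\op}$. Finally I would note that the trivial units of $(R\ast G)^{\op}$ are exactly the trivial units of $R\ast G$, since $\{\bar x a:a\in U(R),x\in G\}$ is setwise unchanged, which is the form in which the lemma will be used later.
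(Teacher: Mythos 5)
Your overall strategy coincides with the paper's: both arguments observe that $R\ast G$, being free as a left $R$-module on $\overline{G}$, makes $(R\ast G)^{\op}$ a free right $R^{\op}$-module on $\overline{G^{\op}}$, and then read off the structure maps. Your identification $\sigma^{\op}(x)=\sigma(x)^{-1}$ agrees with the paper, although the intermediate chain $a\cdot\bar x=\bar x a=\bar x\,a^{\sigma(x)}$ is garbled: $\bar x a$ is already in standard basis-times-right-coefficient form, and it is not equal to $\bar x a^{\sigma(x)}$ in general; the correct matching is $\bar x a=\bar x\,(a^{\sigma^{\op}(x)})^{\sigma(x)}$, which does give your (and the paper's) conclusion.

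The genuine error is the twisting map. The relation defining the crossed product $(R\ast G)^{\op}=R^{\op}\ast G^{\op}$ must express $\bar x\cdot\bar y$ as a basis element times a coefficient \emph{with respect to the right $R^{\op}$-module structure of $(R\ast G)^{\op}$}, i.e.\ with the coefficient acting by left multiplication in $R\ast G$. Computing in $R\ast G$ one has $\bar x\cdot\bar y=\bar y\bar x=\overline{yx}\,\tau(y,x)$, and to place the coefficient on the correct side one must pull $\tau(y,x)$ across $\overline{yx}$ using \eqref{eq:1}, which introduces $\sigma(yx)^{-1}$: namely $\overline{yx}\,\tau(y,x)=\tau(y,x)^{\sigma(yx)^{-1}}\,\overline{yx}$. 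Hence $\tau^{\op}(x,y)=\tau(y,x)^{\sigma(yx)^{-1}}$, which is the formula in the paper's proof, and not $\tau(y,x)$ as you claim; with your $\tau^{\op}$ the defining relation for $(R\ast G)^{\op}$ holds only when $\sigma(yx)$ fixes $\tau(y,x)$, so the ``routine bookkeeping'' you defer would fail at exactly this point. The slip is precisely the inversion issue you yourself flag as the main obstacle, and it is easily repaired, but as written the cocycle is incorrect. Your closing remark that the trivial units of $(R\ast G)^{\op}$ coincide with those of $R\ast G$ is fine, since $a\bar x=\bar x a^{\sigma(x)}$ shows the set $\{\bar x a : a\in U(R),\ x\in G\}$ is unchanged under reversing sides.
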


\begin{proof}
  By definition, $R\ast G$ is a free right $R$-module
  with basis $\overline{G}$. It follows that $R\ast G$ is also
  a free left $R$-module with basis $\overline{G}$ and, hence, $(R\ast G)^{\op}$
  is a free right $R^{\op}$-module with basis $\overline{G^{\op}}$.
  Further, if $\sigma\colon G\to\Aut(R)$ and $\tau\colon G\times G\to U(R)$
  are the maps defining the multiplication in $R\ast G$, then it is easy
  to see that $(R\ast G)^{\op}=R^{\op}\ast G^{\op}$, with maps
  $$
  \begin{array}{rcl}
  G^{\op} & \longrightarrow & \Aut(R^{\op})\\
  x & \longmapsto & \sigma(x)^{-1}
  \end{array}
  \quad\text{and}\quad
  \begin{array}{rcl}
  G^{\op}\times G^{\op} & \longrightarrow & U(R^{\op})\\
  (x,y) & \longmapsto & \tau(y,x)^{\sigma(yx)^{-1}}.
  \end{array}
  $$

\end{proof}

The following is a remark on crossed products embeddable in
division rings that will be used in this section
and in the next one.

\begin{remark}\label{rem:1}
  Let $R\ast G$ be a crossed product of the group $G$ over the
  ring $R$. Suppose that there exists a division ring $D$ and a ring embedding
  $R\ast G\hookrightarrow D$. For each subgroup $N$ of
  $G$, there is a natural ring homomorphism $R\ast N\hookrightarrow R\ast G$.
  Let $D(R,N)$ denote the subdivision ring of $D$ generated
  by $R\ast N$. Then, clearly, $D(R,N) = \cup_{i\geq 0} D_i$, where
  $D_0=R\ast N$ and for each $i\geq 1$, $D_i$ is the subring of $D$ generated
  by $D_{i-1}$ and all the inverses of the nonzero elements of
  $D_{i-1}$. Further, if $N$ is a normal subgroup of a subgroup $H$ of $G$ and
  $t\in H$, then the inner automorphism $\sigma\colon D\to D$
  defined by $\sigma(f)=\bar{t}^{-1}f\bar{t}$, for all $f\in D$,
  restricts to an automorphism of $D(R,N)$. This is so, because
  $\sigma$ leaves $D_0$ invariant and, hence, $\sigma(D_i)\subseteq D_i$,
  for all $i\geq 0$.
\end{remark}

\subsection{Hughes-free embeddings}\label{ssec:li}

A group $G$ is \emph{locally indicable} if for every nontrivial
finitely generated subgroup $H$ of $G$ there exists a normal
subgroup $N$ of $H$ such that $H/N$ is infinite cyclic.
Clearly, locally indicable groups are torsion-free.

Subgroups of locally indicable groups are locally indicable.
Moreover, extensions of locally indicable groups are locally indicable
(cf.~\cite{gH40}) and orderable groups are locally indicable (by \cite{fL43});
it follows that poly-orderable groups are locally indicable.

Let $G$ be a group and let $\delta$ be an ordinal. A \emph{subnormal
series} of $G$ (indexed on $\delta$) is a chain of subgroups
$\{G_\gamma\}_{\gamma\leq\delta}$ of $G$ such that
\begin{enumerate}[(i)]
\item $G_0=\{1\}$, $G_\delta=G$,
\item $G_\gamma$ is a normal subgroup of $G_{\gamma+1}$, for all $\gamma<\delta$, and
\item for each limit ordinal $\gamma'\leq\delta$, 
  $G_{\gamma'}=\bigcup_{\gamma<\gamma'}G_\gamma.$
\end{enumerate}

Let $\mathcal{X}$ be a class of groups closed under isomorphisms. We
say that $G$ is a \emph{$\delta$-poly-$\mathcal{X}$} group if there
exists a subnormal series $\{G_\gamma\}_{\gamma\leq\delta}$ of $G$
such that for all $\gamma<\delta$ the quotient
$G_{\gamma+1}/G_\gamma\in\mathcal{X}$.

Given an ordinal $\delta$, we shall consider $\delta$-poly-orderable
groups. Note that such a group is locally indicable,
because orderable groups are locally indicable and extensions of
locally indicable groups are locally indicable. Since local
indicability is a local property of a group, it follows by
transfinite induction that a $\delta$-poly-orderable group is
locally indicable.

\medskip

Let $G$ be a locally indicable group, let $K$ be a division
ring and let $K\ast G$ be a crossed product of $G$ over $K$.
Suppose that there exists a ring embedding $K\ast G\hookrightarrow D$
of $K\ast G$ into a division ring $D$. We say that the embedding
$K\ast G\hookrightarrow D$ is \emph{Hughes-free} if for each
nontrivial finitely generated subgroup $H$ of $G$, for each
normal subgroup $N$ of $H$ for which $H/N$ is infinite cyclic,
and for each $t\in H$ such that $H/N=\langle tN\rangle$, the
set $\{\bar{t}^n : n\geq 0\}$ is right $D(K,N)$-linearly
independent inside $D$. If, moreover, $D=D(K,G)$, we say that $D$ is
a \emph{Hughes-free division ring of fractions} of $K\ast G$.

If a crossed product $K\ast G$ of a locally
indicable group $G$ over a division ring $K$ is an
Ore domain, then clearly any embedding of $K\ast G$ into a division
ring will be Hughes-free.

If $G$ is an ordered group, $K$ is a division ring and $K\ast G$ is
a crossed product of $G$ over $K$, then $K\ast G$ can be
embedded in the division ring $K\ast((G))$ of Malcev-Neumann
series (\cite{aM48, bN49}), whose elements are of the form
$$
f=\sum_{x\in G}\bar{x}a_x,
$$
with $a_x\in K$ and $\supp(f)=\{x\in G : a_x\ne 0\}$ a
well-ordered subset of $G$, where multiplication satisfies
the laws \eqref{eq:1}. It is known that the embedding 
$K\ast G\hookrightarrow K\ast ((G))$ is Hughes-free \cite{iH70}.

Hughes-free division rings of fractions are unique:

\begin{theorem}[{\cite[Theorem]{iH70}}]\label{teo:hughesI}
  Let $K$ be a division ring, let $G$ be a locally indicable group and
  let $K\ast G$ be a crossed product. Suppose that $D_1$ and $D_2$
  are Hughes-free division rings of fractions of $K\ast G$.
  Then there exists a unique ring isomorphism $\varphi\colon D_1\to D_2$ such that
  $\varphi(\alpha)=\alpha$, for all $\alpha\in K\ast G$. \qed
\end{theorem}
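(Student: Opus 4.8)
The plan is to settle the (easy) uniqueness assertion first and then to construct $\varphi$ by reducing to finitely generated $G$ and building the isomorphism by a transfinite recursion that extends it along one infinite cyclic quotient at a time. For uniqueness, I would use the layer description of Remark~\ref{rem:1}: each of $D_1,D_2$ is $\bigcup_{n\ge 0}D_i^{(n)}$ with $D_i^{(0)}=K\ast G$ and $D_i^{(n)}$ the subring generated by $D_i^{(n-1)}$ together with the inverses of its nonzero elements. Since a ring homomorphism between division rings carries the inverse of a unit to the inverse of its image, induction on $n$ shows that two ring homomorphisms $D_1\to D_2$ agreeing on $K\ast G$ agree on every $D_1^{(n)}$, hence coincide; the same holds with $G$ replaced by any subgroup. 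In particular at most one $K\ast G$-fixing isomorphism exists, so only existence remains.

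Next I would reduce to $G$ finitely generated. For a subgroup $H\le G$ write $\varphi_H\colon D_1(K,H)\to D_2(K,H)$ for the (then unique) $K\ast H$-fixing isomorphism, if it exists; if $H\le H'$ the restriction of $\varphi_{H'}$ is again $K\ast H$-fixing, hence equals $\varphi_H$, so the $\varphi_H$ form a compatible family. As $D_i=D_i(K,G)=\bigcup_H D_i(K,H)$ over the finitely generated $H$, the union of the $\varphi_H$ is the desired $\varphi$. So from now on $G$ is finitely generated, and I build $\varphi=\varphi_G$ by recursion.

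The extension step: if $G\ne\{1\}$, local indicability gives $N\triangleleft G$ with $G/N=\langle tN\rangle\cong\Z$, and I assume $\varphi_N\colon D_1(K,N)\to D_2(K,N)$ already built. By Remark~\ref{rem:1}, conjugation by $\bar t$ is an automorphism $\tau_i$ of $D_i(K,N)$ extending $\sigma(t)$ on $K\ast N$, and since $\varphi_N\circ\tau_1$ and $\tau_2\circ\varphi_N$ agree on $K\ast N$, the uniqueness principle gives $\varphi_N\circ\tau_1=\tau_2\circ\varphi_N$. Hughes-freeness makes $\{\bar t^{\,n}:n\ge 0\}$ right $D_i(K,N)$-linearly independent, and multiplying by a power of $\bar t$ upgrades this to independence of $\{\bar t^{\,n}:n\in\Z\}$; since $\bar t$ normalizes $D_i(K,N)$ in $D_i$, the subring generated by $D_i(K,N)$ and $\bar t^{\pm1}$ is a skew Laurent polynomial ring $D_i(K,N)[\bar t^{\pm1};\tau_i]$. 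This is an Ore domain, and the subdivision ring of $D_i$ it generates contains $K\ast G$ and lies inside $D_i(K,G)$, hence equals $D_i(K,G)$; so $D_i(K,G)$ is its Ore field of fractions. Because $\varphi_N$ intertwines $\tau_1$ and $\tau_2$, the pair $(\varphi_N,\ \bar t\mapsto\bar t)$ extends to an isomorphism of these skew Laurent polynomial rings, and then to their Ore fields of fractions, producing $\varphi_G$; by construction it fixes $K\ast N$ and $\bar t$, hence $K\ast G$.

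The hard part will be justifying this recursion. The kernel $N$ of $G\twoheadrightarrow\Z$ is in general \emph{not} finitely generated, and its finitely generated subgroups need not be simpler than $G$ in any visible way --- the kernel of a surjection $F\twoheadrightarrow\Z$ from a nonabelian free group $F$ is an infinitely generated free group containing isomorphic copies of $F$ itself --- so one cannot merely induct on the group and quote the extension step. Making the construction close is the substance of Hughes's theorem: one builds the whole compatible family $\{\varphi_H\}$ at once by a transfinite recursion whose parameter is a complexity rank on the elements of the $D_i(K,H)$, recording how each is assembled from $K\ast G$ by the ring operations, inversion, and the skew-Laurent extensions along cyclic quotients, and one verifies at each stage that the value forced on a new element is independent of the chosen assembly. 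Carrying out this bookkeeping is precisely what is done in \cite{iH70}, to which I refer for the details.
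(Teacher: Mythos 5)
The paper does not prove this statement at all: it is quoted from Hughes, with the remark that a complete proof can be found in \cite{DHS04}, so there is no internal argument to compare against. Your proposal is correct as far as it goes --- the uniqueness argument via the layers of Remark~\ref{rem:1}, the reduction to finitely generated subgroups, and the skew-Laurent/Ore extension step along an infinite cyclic quotient are all sound --- and you rightly identify that the genuinely hard part (closing the recursion when the kernel of $G\twoheadrightarrow\Z$ is infinitely generated, via Hughes's complexity induction) is precisely what \cite{iH70} supplies, which is the same deferral the paper itself makes.
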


A proof of Theorem~\ref{teo:hughesI} can be found in \cite{DHS04}.

\medskip

We recall that a locally indicable group $\Gamma$ is said to be
\emph{Hughes-free embeddable} if every crossed product $F\ast\Gamma$
of $\Gamma$ over a division ring $F$ has a Hughes-free division ring
of fractions. For example, torsion-free nilpotent groups, or more
generally orderable groups, are Hughes-free embeddable.

It was proved in \cite{iH72} that extensions of Hughes-free
embeddable groups are Hughes-free embeddable. Thus poly-orderable
groups are Hughes-free embeddable.

\begin{lemma} Let $\mathcal{X}$ be the class of Hughes-free
  embeddable groups.
  Let $\delta$ be an ordinal and let $G$ be a $\delta$-poly-$\mathcal{X}$
  group. Then $G$ is a Hughes-free embeddable group. Hence a
  $\delta$-poly-orderable group is Hughes-free embeddable.
\end{lemma}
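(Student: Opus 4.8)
The plan is to argue by transfinite induction on the ordinal $\delta$; the base and successor cases are immediate, and essentially all the content lies in the passage through limit ordinals. Fix throughout a subnormal series $\{G_\beta\}_{\beta\le\delta}$ witnessing that $G$ is $\delta$-poly-$\mathcal{X}$, and observe that for every $\gamma\le\delta$ its truncation $\{G_\beta\}_{\beta\le\gamma}$ witnesses that $G_\gamma$ is $\gamma$-poly-$\mathcal{X}$. For $\delta=0$ the group $G$ is trivial and the only crossed product of $G$ over a division ring $F$ is $F$ itself, which is its own Hughes-free division ring of fractions. For a successor $\delta=\gamma+1$, the normal subgroup $G_\gamma$ is $\gamma$-poly-$\mathcal{X}$, hence Hughes-free embeddable by the induction hypothesis, while $G/G_\gamma=G_{\gamma+1}/G_\gamma$ lies in $\mathcal{X}$ and is therefore Hughes-free embeddable; thus $G$ is an extension of a Hughes-free embeddable group by a Hughes-free embeddable group, and $G$ is Hughes-free embeddable by the theorem of Hughes quoted above (\cite{iH72}).

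Now suppose $\delta$ is a limit ordinal, so that $G=G_\delta=\bigcup_{\gamma<\delta}G_\gamma$ is an ascending union of the subgroups $G_\gamma$, each of which is $\gamma$-poly-$\mathcal{X}$ and hence Hughes-free embeddable by the induction hypothesis; moreover $G$, being built from orderable (hence locally indicable) groups by extensions and ascending unions, is locally indicable. Let $F\ast G$ be an arbitrary crossed product of $G$ over a division ring $F$ and, for $\gamma<\delta$, let $F\ast G_\gamma$ be the induced sub-crossed-product and $D_\gamma$ a Hughes-free division ring of fractions of $F\ast G_\gamma$. The first step is to note that Hughes-freeness is inherited by subgroups: for $\beta\le\gamma<\delta$, the subdivision ring $D_\gamma(F,G_\beta)$ of $D_\gamma$ generated by $F\ast G_\beta$ is a Hughes-free division ring of fractions of $F\ast G_\beta$. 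Indeed, given a nontrivial finitely generated $H\le G_\beta$, a normal $N\trianglelefteq H$ with $H/N$ infinite cyclic and $t\in H$ with $H/N=\langle tN\rangle$, the subdivision ring of $D_\gamma(F,G_\beta)$ generated by $F\ast N$ equals $D_\gamma(F,N)$ (cf.\ Remark~\ref{rem:1}), and $\{\bar{t}^n : n\ge 0\}$ is right $D_\gamma(F,N)$-linearly independent in $D_\gamma$ by Hughes-freeness of $D_\gamma$ over $F\ast G_\gamma$. Hence, by the uniqueness theorem (Theorem~\ref{teo:hughesI}), for each pair $\beta\le\gamma$ there is a unique ring isomorphism $\varphi_{\beta\gamma}\colon D_\beta\to D_\gamma(F,G_\beta)$ fixing $F\ast G_\beta$ pointwise; uniqueness forces $\varphi_{\alpha\gamma}=\varphi_{\beta\gamma}\circ\varphi_{\alpha\beta}$ whenever $\alpha\le\beta\le\gamma$, so the $D_\gamma$ form a directed system of division rings with injective transition maps.

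Let $D=\varinjlim_{\gamma<\delta}D_\gamma$. A direct limit of division rings along injective homomorphisms is again a division ring; it contains $F\ast G=\bigcup_{\gamma<\delta}F\ast G_\gamma$ and coincides with the subdivision ring generated by it, since each $D_\gamma=D_\gamma(F,G_\gamma)$. To check that $F\ast G\hookrightarrow D$ is Hughes-free, take a nontrivial finitely generated $H\le G$, a normal $N\trianglelefteq H$ with $H/N$ infinite cyclic and $t\in H$ with $H/N=\langle tN\rangle$. Since $\delta$ is a limit ordinal and the $G_\gamma$ form an ascending chain with union $G$, the finitely many generators of $H$ lie in a common $G_\gamma$, so $H,N\le G_\gamma$; then $D(F,N)$ is identified with the image of $D_\gamma(F,N)$, and right $D_\gamma(F,N)$-linear independence of $\{\bar{t}^n : n\ge 0\}$ in $D_\gamma$ gives right $D(F,N)$-linear independence in $D$. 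Thus $D$ is a Hughes-free division ring of fractions of $F\ast G$, which completes the induction. Finally, since orderable groups are Hughes-free embeddable, the class of orderable groups is contained in $\mathcal{X}$, so every $\delta$-poly-orderable group is $\delta$-poly-$\mathcal{X}$ and is therefore Hughes-free embeddable. The step I expect to be the main obstacle is the limit case — specifically, checking that Hughes-freeness descends to subgroups so that the uniqueness theorem can be invoked to glue the $D_\gamma$ into a coherent directed system whose direct limit is again Hughes-free.
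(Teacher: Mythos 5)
Your proposal is correct and follows essentially the same route as the paper: transfinite induction, with the successor step handled by Hughes's extension theorem \cite{iH72} and the limit step by forming the direct limit $\varinjlim_{\rho<\gamma} D_\rho$ and checking Hughes-freeness via the fact that any finitely generated subgroup lies in some $G_\rho$. Your limit case is in fact slightly more detailed than the paper's, since you explicitly verify that Hughes-freeness descends to subgroups and invoke Theorem~\ref{teo:hughesI} to obtain a coherent directed system, a point the paper leaves implicit when asserting its diagram commutes; the only slip is calling the factors ``orderable'' rather than Hughes-free embeddable, which is harmless because such groups are locally indicable by definition.
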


\begin{proof}
  Let $\{G_\gamma\}_{\gamma\leq\delta}$ be a subnormal series of $G$ such that
  $G_{\gamma+1}/G_\gamma$ is Hughes-free embeddable for each $\gamma<\delta$. We shall
  show that $G_\gamma$ is Hughes-free embeddable for all $\gamma\leq\delta$.

  If $\gamma=1$, then clearly $G_\gamma$ is Hughes-free embeddable.

  Let $\gamma>1$ and suppose that the result holds for all ordinals $\rho<\gamma$.
  If $\gamma=\rho+1$ for some ordinal $\rho$, then $G_\gamma$ is the extension of the
  Hughes-free embeddable groups $G_\rho$ and $G_\gamma/G_\rho$. By \cite{iH72},
  $G_\gamma$ is Hughes-free embeddable.

  Finally, suppose that $\gamma$ is a limit ordinal. Let $K$ be a division ring and
  let $K\ast G_\gamma$ be a crossed product. If $\varepsilon<\rho<\gamma$, then it
  follows from Theorem~\ref{teo:hughesI} that the following diagram is commutative
  $$
  \xymatrix{
  K\ast G_\varepsilon\ar@{^{(}->}[r]\ar@{^{(}->}[d] & K\ast G_\rho\ar@{^{(}->}[d]  \\
  D_\varepsilon\ar@{^{(}->}[r] & D_\rho
  }
  $$
  where $D_\varepsilon$ and $D_\rho$ are the Hughes-free division ring of fractions
  of $K\ast G_\varepsilon$ and $K\ast G_\rho$, respectively. Let
  $D_\gamma=\varinjlim_{\rho<\gamma} D_\rho$. Then $D_\gamma$ is a division ring that
  contains $K\ast G_\gamma=\varinjlim_{\rho<\gamma} K\ast G_\rho$ and $D_\rho$ for all $\rho<\gamma$.
  Let $H$ be a finitely generated subgroup of $G_\gamma$. Then $H$ is contained in some
  $G_\rho$ for some $\rho<\gamma$. Thus $K\ast H$ is contained in $K\ast G_\rho$ and $D_\rho$,
  which proves the linear independence condition that characterizes Hughes-freeness,
  because $K\ast G_\rho \hookrightarrow D_\rho$ is Hughes-free. Since $H$, $K$ and $K\ast G_\gamma$
  were arbitrary, we get that $G_\gamma$ is Hughes-free embeddable.

  The second statement follows because orderable groups are
  Hughes-free embeddable.
\end{proof}

\subsection{Extending involutions}\label{ssec:extinv}

We shall show that involutions of a crossed product of a locally
indicable group by a division ring extend to involutions on a Hughes-free
division ring of fractions. In order to prove this we shall also
present a version of Corollary~7.3 from \cite{DHS04}. We start with two lemmas
which will also be used in the proof of Theorem~\ref{teo:extension},
the main result in this section.

\begin{lemma}\label{le:iso}
  Let $K$ and $K'$ be division rings, let $G$ and $G'$
  be groups and let $K\ast G$ and $K'\ast G'$ be crossed
  products. Suppose that $K\ast G$ and $K'\ast G'$ have only trivial
  units. Then, if $\phi\colon K\ast G\to K'\ast G'$ is a ring isomorphism,
  we have
  \begin{enumerate}[(i)]
  \item $\phi(K)=K'$,\label{item1}
  \item $\phi(K^{\times}G)=(K')^{\times}G'$, and \label{item2}
  \item $\phi$ induces a group isomorphism $G\to G'$. \label{item3}
  \end{enumerate}
\end{lemma}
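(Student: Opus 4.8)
The plan is to exploit the hypothesis that both crossed products have only trivial units, which forces any ring isomorphism to respect the multiplicative structure coming from the grading. First I would prove (i). Since every unit of $K\ast G$ is of the form $\bar{x}a$ with $a\in K^{\times}$, $x\in G$, and similarly in $K'\ast G'$, the isomorphism $\phi$ maps trivial units to trivial units (it is a bijection on unit groups). In particular $\phi(K^{\times})$ consists of trivial units of $K'\ast G'$. The key observation is that $K^{\times}$ is a normal subgroup of the group $T=\{\bar{x}a\}$ of trivial units of $K\ast G$: indeed $\bar{x}a\, b\,(\bar{x}a)^{-1}=\bar{x}\,ab\,a^{-1}\bar{x}^{-1}\in K^{\times}$ by the relations \eqref{eq:1}, since conjugation by $\bar{x}$ is the automorphism $\sigma(x)^{-1}$ of $K$ (up to the twist by $\tau$, which lands in $K^{\times}$). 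Moreover $T/K^{\times}\cong G$ is torsion-free if $G$ is — but we do not even need that; what we need is an intrinsic characterization of $K^{\times}$ inside $T$. Here the cleanest route is: an element $u\in T$ lies in $K^{\times}$ if and only if $1+u$ is a unit \emph{or} $u=-1$ — no, that is not quite invariant either. Let me instead use the additive structure: $K$ is precisely the set of elements $f\in K\ast G$ such that $f+1$ and $f-1$... this is getting delicate. The robust characterization is that $K$ is the subring generated by those trivial units $u$ for which $u$ and $u'$ commute for "many" $u'$; more precisely $K^{\times}$ is the FC-center-type subgroup, but the simplest correct statement is: $K^{\times}=\{u\in T : u+1\in T\cup\{0\}\}$? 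No. I will instead argue directly as follows.

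Here is the argument I would actually carry out. An element $f\in K\ast G$ is a \emph{trivial unit times a scalar plus zero} — rather, the key ring-theoretic fact is: $f\in K$ if and only if either $f=0$ or $f$ is a unit and for every unit $u$, the element $f+u$ is either zero or a unit. Let me verify the forward direction: if $f=a\in K^{\times}$ and $u=\bar{x}b$, then $a+\bar{x}b$; if $x=1$ this is $a+b\in K$, either $0$ or a unit; if $x\neq 1$ this is a sum of two elements with disjoint support $\{1,x\}$, hence has no inverse in general — so this characterization fails. I therefore abandon the attempt to characterize $K$ purely ring-theoretically in one line, and instead use: since $\phi(T)=T'$ (isomorphism of unit groups sends trivial units to trivial units), $\phi$ descends to an isomorphism $T/Z(T)\to T'/Z(T')$ after suitable reductions, but the honest path — and I expect this is what the authors do — is to invoke that $\phi$ maps trivial units to trivial units, write $\phi(a)=\bar{y}c$ for $a\in K$, and then use additivity: $\phi(a_1)+\phi(a_2)=\phi(a_1+a_2)$ must again be a trivial unit (or $0$) for \emph{all} $a_1,a_2\in K$, and a sum $\bar{y_1}c_1+\bar{y_2}c_2$ of trivial units is again trivial (or $0$) only when $y_1=y_2$; since $K$ has at least three elements in the cases that matter (and the characteristic-2, $\field=\mathbb{F}_2$ degenerate cases are handled separately or trivially), we conclude all $\phi(a)$ share the same group-component $y$, and since $\phi(1)=1$ forces $y=1$, we get $\phi(K)\subseteq K'$; symmetrically $\phi^{-1}(K')\subseteq K$, giving (i).

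Part (ii) is then immediate: $\phi$ sends trivial units to trivial units, i.e.\ $\phi(K^{\times}\overline{G})$ — the set of all trivial units, which as a set equals $K^{\times}G$ in the notation of the statement (union over $x$ of the cosets $K^\times \bar x$) — bijects onto $(K')^{\times}G'$. For part (iii), by (i) the map $\phi$ induces a ring automorphism-compatible structure, and by (ii) it induces a bijection on the quotients $K^{\times}G/K^{\times}\to (K')^{\times}G'/(K')^{\times}$; since $K^{\times}\trianglelefteq K^{\times}G$ with quotient canonically $G$ (the trivial units modulo scalars, with multiplication $\bar{x}a\cdot\bar{y}b=\overline{xy}\,(\text{scalar})$), and likewise for the primed side, $\phi$ induces a group isomorphism $G\cong K^{\times}G/K^{\times}\to (K')^{\times}G'/(K')^{\times}\cong G'$, which is the desired $G\to G'$. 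The main obstacle, as the above indicates, is pinning down why $\phi(K)=K'$ rather than merely mapping scalars to some other "slice" of trivial units; the resolution is the additivity argument forcing a common group-component, together with $\phi(1)=1$ normalizing it to the identity — and one must take a little care in the small-field cases where $|K^{\times}|$ is too small for the "sum of trivial units with distinct components is non-trivial" heuristic, though since here $K\supseteq\field(\lambda)$ or is otherwise infinite in all applications in this paper, this is not a real difficulty.
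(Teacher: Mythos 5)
Your final argument is correct and is essentially the paper's own proof: both hinge on the fact that a sum of two trivial units with distinct group components has support of size two, hence is neither zero nor a unit, so additivity together with $\phi(1)=1$ forces $\phi(K)=K'$, and (iii) then follows by passing to the quotient of the trivial-unit group by $K^{\times}$ (the paper runs the same support computation coset-by-coset, writing $\bar{x}a=\bar{x}(a-1)+\bar{x}$, which yields (i) and (ii) simultaneously). Your worry about small $K$ is unnecessary: comparing each $\phi(a)$, $a\in K^{\times}$, with $\phi(1)=1$ already pins its group component to $1$ for every division ring $K$, including $\mathbb{F}_2$, so no separate degenerate case is needed.
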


\begin{proof}
  Let $x\in G$ be a fixed element. Since $K'\ast G'$ has only trivial units,
  $\phi(\bar{x})=\bar{y}b$, for some $y\in G'$ and some $b\in (K')^{\times}$.
  We claim that $\phi(\bar{x}K)=\bar{y}K'$. Indeed, given $a\in K\setminus\{0,1\}$,
  we have $\phi(\bar{x}a)=\phi\bigl(\bar{x}(a-1)\bigr)+\bar{y}b$. Since $\bar{x}(a-1)$ is
  a unit, $\phi\bigl(\bar{x}(a-1)\bigr)=\bar{z}c$ for some $z\in G'$
  and some $c\in (K')^{\times}$. But $\phi(\bar{x}a)$ is also a (trivial) unit.
  Hence $z=y$ and we have $\phi(\bar{x}K)\subseteq \bar{y}K'$. Using that
  $\phi^{-1}(\bar{y}b)=\bar{x}$, one proves similarly that
  $\phi^{-1}(\bar{y}K')\subseteq \bar{x}K$.

  The above proves \ref{item2} and \ref{item1}, by setting $x=1$.
  To obtain \ref{item3}, note that the surjective group homomorphism
  $K^{\times}G \to (K')^{\times}G'/(K')^{\times}$,
  obtained by composing the isomorphism $K^{\times}G\to (K')^{\times}G'$,
  given by the restriction of $\phi$, with the canonical surjection
  $(K')^{\times}G'\to (K')^{\times}G'/(K')^{\times}$, has kernel $K^{\times}$.
  Since $G\cong K^{\times}G/K^{\times}$ and $G'\cong (K')^{\times}G'/(K')^{\times}$,
  \ref{item3} follows. (Explicitly, the isomorphism $G\to G'$ is
  given by $x\mapsto y$, where $\supp\big(\phi(\bar{x})\big)=\{y\}$.)
\end{proof}

\begin{lemma}\label{le:isohf}
  Let $K$ and $K'$ be division rings, let $G$ and $G'$ be locally indicable
  groups and let $K\ast G$ and $K'\ast G'$ be crossed products. Suppose
  that $K\ast G$ and $K'\ast G'$ are isomorphic as rings and that $K'\ast G'$
  has a Hughes-free division ring of fractions $D$. Then
  $D$ is also a Hughes-free division ring of fractions for $K\ast G$.
\end{lemma}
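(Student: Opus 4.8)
The plan is to fix a ring isomorphism $\phi\colon K\ast G\to K'\ast G'$, compose it with the given Hughes-free embedding $\iota\colon K'\ast G'\hookrightarrow D$ to get an embedding $j=\iota\phi\colon K\ast G\hookrightarrow D$, and then verify that $j$ makes $D$ a Hughes-free division ring of fractions of $K\ast G$. The point will be that $\phi$ is rigid enough to respect the crossed-product data, so that the defining linear-independence conditions for $\iota$ transfer to $j$. To make $\phi$ rigid I will invoke Lemma~\ref{le:iso}; its hypothesis holds here because it is well known that a crossed product of a locally indicable group over a division ring has only trivial units. Thus at the outset I record that $\phi(K)=K'$ and that $\phi$ induces a group isomorphism $\psi\colon G\to G'$ with $\supp(\phi(\bar x))=\{\psi(x)\}$, so $\phi(\bar x)=\overline{\psi(x)}\,u_x$ with $u_x\in(K')^{\times}$ for every $x\in G$.

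Next I would dispatch the easy half, $D=D(K,G)$ for $j$: since $\phi$ is surjective, $j(K\ast G)=\iota(K'\ast G')$, which generates $D$ as a division ring by hypothesis. For Hughes-freeness I would start from a nontrivial finitely generated subgroup $H\le G$, a normal subgroup $N\trianglelefteq H$ with $H/N$ infinite cyclic, and $t\in H$ with $H/N=\langle tN\rangle$, and transport them by $\psi$: set $H'=\psi(H)$, $N'=\psi(N)$, $t'=\psi(t)$, so that $H'$ is finitely generated and nontrivial, $N'\trianglelefteq H'$, and $H'/N'=\langle t'N'\rangle$ is infinite cyclic. From $\phi(K)=K'$ and $\phi(\bar n)=\overline{\psi(n)}u_n$ one gets $\phi(K\ast N)=K'\ast N'$, the natural copy inside $K'\ast G'$ of the crossed product of $N'$; hence $j(K\ast N)=\iota(K'\ast N')$, and therefore $D(K,N)=D(K',N')$ as subdivision rings of $D$, where I use the description of these subdivision rings from Remark~\ref{rem:1}.

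The final step is to compare the two candidate linearly independent sets. Moving the factors $u_t$ to the right past $\overline{t'}$ by means of $a\bar x=\bar x a^{\sigma(x)}$, one writes $j(\bar t)^n=\iota\bigl((\overline{t'}u_t)^n\bigr)=\iota(\overline{t'}^{\,n})\,\iota(c_n)$ with $c_n\in(K')^{\times}$. Since $(K')^{\times}\subseteq K'\ast N'$, the element $\iota(c_n)$ is a unit of $D$ lying in the division ring $D(K',N')=D(K,N)$, so right multiplication by it is a bijection of $D(K,N)$; hence $\{j(\bar t)^n:n\ge 0\}$ is right $D(K,N)$-linearly independent in $D$ if and only if $\{\iota(\overline{t'}^{\,n}):n\ge 0\}$ is right $D(K',N')$-linearly independent in $D$, and the latter holds because $\iota$ is Hughes-free, applied to the data $(H',N',t')$. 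That completes the verification. I do not expect a serious obstacle once Lemma~\ref{le:iso} is in hand: the argument is essentially bookkeeping, the points needing (minor) care being the triviality-of-units input feeding Lemma~\ref{le:iso} and the observation that $j(\bar t)$ differs from $\iota(\overline{t'})$ only by a trivial unit, which does not affect linear independence over $D(K,N)$.
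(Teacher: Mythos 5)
Your proposal is correct and follows essentially the same route as the paper's proof: invoke Lemma~\ref{le:iso} (the trivial-units hypothesis being Higman's result, which the paper cites from \cite{gH40}), transport $H$, $N$, $t$ through the induced group isomorphism, observe that $\phi(K\ast N)$ is the copy of the crossed product over the image subgroup so the relevant subdivision rings of $D$ coincide, and note that $\phi(\bar t)^n$ differs from $\overline{\eta(t)}^{\,n}$ only by a unit of $K'$, which does not affect right linear independence over that subdivision ring. No gaps.
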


\begin{proof}
  Denote by $\phi\colon K\ast G\to K'\ast G'$ an isomorphism. Then $K\ast G$
  embeds into $D$ via $\phi$ and $D$ is generated by its image. We are left to prove
  that this embedding is Hughes-free.

  That $K\ast G$ and $K'\ast G'$ have only trivial units was proved in \cite{gH40}, 
  since the grading arguments of his
  Section~4 apply to crossed products.
  By Lemma~\ref{le:iso}, $\phi(K)=K'$ and there is a group isomorphism
  $\eta\colon G\to G'$ induced by $\phi$. For a subgroup $N$ of
  $G$, we then have $\phi(K\ast N)=K'\ast\eta(N)$. This implies that
  the subdivision ring of $D$ generated by (the image of) $K\ast N$
  is $D(K',\eta(N))$.

  Let $H$ be a nontrivial finitely
  generated subgroup of $G$, let $N$ be a normal subgroup of $H$ such
  that $H/N$ is infinite cyclic and pick $t\in H$ such that $H/N=\langle
  tN\rangle$. We must show that the powers of $\phi(\bar{t})$ are right linearly
  independent over $D(K',\eta(N))$. Indeed, let $d_0,\dotsc, d_n\in
  D(K',\eta(N))$ be such that $d_0+\phi(\bar{t})d_1+\dotsb+\phi(\bar{t})^nd_n=0$.
  By definition of $\eta$, for each $i=1,\dotsc,n$, there exists $b_i\in (K')^{\times}$
  such that $\phi(\bar{t})^i = \overline{\eta(t)}^ib_i$. So, we have
  $d_0+\overline{\eta(t)}b_1d_1+\dotsb+\overline{\eta(t)}^nb_nd_n=0$.
  But $\eta(t)\eta(N)$ generates $\eta(H)/\eta(N)$, which is infinite
  cyclic; since $\eta(H)$ is a nontrivial finitely generated subgroup
  of $G'$ and $K'\ast G'\hookrightarrow D$ is Hughes-free, it follows
  that $d_0=0$ and $b_id_i=0$ for all $i=1,\dotsc, n$. But, then,
  $d_i=0$ for all $i=1,\dotsc, n$. So the embedding of $K\ast G$ into $D$ through
  $\phi$ is Hughes-free.
\end{proof}

Let $G$ be a locally indicable group, let $K$ be a division ring and
let $D$ be a Hughes-free division ring of fractions of a crossed
product $K\ast G$. Given a ring automorphism $\phi$ of $K\ast G$,
the embedding of $K\ast G$ into $D$ through $\phi$ is a Hughes-free
division ring of fractions of $K\ast G$, by Lemma~\ref{le:isohf}. If
follows from Theorem~\ref{teo:hughesI} that $\phi$ extends to a ring
isomorphism of $D$. This proves the following result.

\begin{proposition}\label{cor:teohughes}
  Let $K$ be a division ring, let $G$ be a locally indicable group,
  and let $D$ be a Hughes-free division ring of fractions of
  a crossed product $K\ast G$. Then there is a natural injective group
  homomorphism $\Aut(K\ast G)\rightarrow \Aut(D)$. More precisely,
  if $\phi$ is a ring automorphism of $K\ast G$, and $\iota\colon K\ast G
  \rightarrow D$ denotes the Hughes-free embedding of $K\ast G$ into $G$, then
  there exists a unique ring automorphism $\psi$ of $D$ such that
  $\psi\iota=\iota\phi$. \qed
\end{proposition}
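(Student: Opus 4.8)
The plan is to read the statement off from Lemma~\ref{le:isohf} and the uniqueness of Hughes-free division rings of fractions (Theorem~\ref{teo:hughesI}); indeed the argument is just the paragraph preceding the statement, written out in detail.

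Fix $\phi\in\Aut(K\ast G)$ and let $\iota\colon K\ast G\hookrightarrow D$ be the given Hughes-free embedding. The composite $\iota\phi\colon K\ast G\to D$ is an injective ring homomorphism whose image equals $\iota(K\ast G)$, since $\phi$ is surjective; in particular $D$ is generated as a division ring by $\iota\phi(K\ast G)$. By Lemma~\ref{le:isohf}, applied to the isomorphism $\phi\colon K\ast G\to K\ast G$ and the Hughes-free division ring of fractions $(D,\iota)$ on the target, the embedding $\iota\phi$ is again Hughes-free. Thus $D$ is a Hughes-free division ring of fractions of $K\ast G$ in two ways: via $\iota$ and via $\iota\phi$. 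Applying Theorem~\ref{teo:hughesI} with $D_1=D_2=D$, where $K\ast G$ is identified inside $D_1$ through $\iota$ and inside $D_2$ through $\iota\phi$, yields a unique ring automorphism $\psi$ of $D$ with $\psi\iota=\iota\phi$; this is the asserted extension of $\phi$. Uniqueness is in any case automatic once existence is known, since $\iota(K\ast G)$ generates $D$ as a division ring.

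It remains to check that $\Psi\colon\Aut(K\ast G)\to\Aut(D)$, $\phi\mapsto\psi$, is an injective group homomorphism. If $\phi_1,\phi_2\in\Aut(K\ast G)$ have images $\psi_1,\psi_2$, then $(\psi_1\psi_2)\iota=\psi_1(\iota\phi_2)=\iota(\phi_1\phi_2)$, so by the uniqueness in Theorem~\ref{teo:hughesI} we get $\Psi(\phi_1\phi_2)=\psi_1\psi_2$; likewise $\Psi(\Id_{K\ast G})=\Id_D$. For injectivity, $\Psi(\phi)=\Id_D$ gives $\iota\phi=\Id_D\circ\iota=\iota$, and since $\iota$ is injective this forces $\phi=\Id_{K\ast G}$.

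The only genuinely substantial input is Lemma~\ref{le:isohf}, which itself relies on crossed products of locally indicable groups over division rings having only trivial units; granting that lemma together with Theorem~\ref{teo:hughesI}, the present statement is a purely formal consequence, and the only point requiring care is keeping the directions of the various embeddings and isomorphisms consistent. So I do not anticipate a real obstacle here.
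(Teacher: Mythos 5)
Your proof is correct and takes essentially the same route as the paper: the paper's argument (the paragraph preceding the proposition) likewise applies Lemma~\ref{le:isohf} to conclude that the embedding $\iota\phi$ is again Hughes-free and then invokes the uniqueness statement of Theorem~\ref{teo:hughesI} to extend $\phi$ to an automorphism of $D$. Your explicit verification that $\phi\mapsto\psi$ is an injective group homomorphism is left implicit in the paper but is exactly the intended formal argument.
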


At first sight, it could seem that Proposition~\ref{cor:teohughes}
is a more general version of \cite[Corollary~7.3]{DHS04}. We remark
that Lemma~\ref{le:iso} together with the fact that $K\ast G$ has
only trivial units imply that any automorphism of $K\ast G$ sends
$K$ to $K$, and thus Proposition~\ref{cor:teohughes} and
\cite[Corollary~7.3]{DHS04} are the same result.

The next lemma allows us to apply Theorem~\ref{teo:hughesI} to involutions.

\begin{lemma}\label{le:opp}
  Let $K$ be a division ring, let $G$ be a locally indicable group, and
  let $K\ast G$ be a crossed product of $G$ over $K$. Let $D$
  be a division ring. Then an embedding $K\ast G\hookrightarrow D$
  is Hughes-free if and only if $(K\ast G)^{\op}\hookrightarrow D^{\op}$
  is Hughes-free.
\end{lemma}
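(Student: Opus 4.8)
The plan is to unwind the definition of Hughes-freeness on both sides, match up all the relevant data via Lemma~\ref{le:oppcp}, and observe that the two conditions differ only in that one asks for right linear independence while the other asks for left linear independence; a short conjugation argument resting on Remark~\ref{rem:1} then closes the gap.

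First I would do the bookkeeping. By Lemma~\ref{le:oppcp}, $(K\ast G)^{\op}$ is a crossed product $K^{\op}\ast G^{\op}$; since $K^{\op}$ is a division ring and $G^{\op}\cong G$ is locally indicable (via $x\mapsto x^{-1}$), it does make sense to ask whether $(K\ast G)^{\op}\hookrightarrow D^{\op}$ is Hughes-free. Its distinguished basis is the same set $\overline{G}$ as that of $K\ast G$, and the embedding into $D^{\op}$ is the same map of underlying sets as the embedding of $K\ast G$ into $D$. Being a (normal) subgroup, being finitely generated, and having a quotient isomorphic to $\Z$ are all notions unaffected by passing to the opposite group; and since an infinite cyclic group is abelian and equal to its own opposite, for $N\trianglelefteq H$ with $H/N$ infinite cyclic the equality $H/N=\langle tN\rangle$ holds in $G$ if and only if it holds in $G^{\op}$. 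Hence the triples $(H,N,t)$ over which the Hughes-free condition quantifies are literally the same for the two embeddings. Finally, applying Remark~\ref{rem:1} inside $D^{\op}$ exhibits $D^{\op}(K^{\op},N)$ as built from the image of $K^{\op}\ast N=(K\ast N)^{\op}$ by repeatedly forming subrings and adjoining inverses, operations insensitive to the order of multiplication; so $D^{\op}(K^{\op},N)=D(K,N)$ as subsets of $D$.

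Next I would translate the linear independence condition. For a single element, the $n$-th power is the same in $K\ast G$ and in $(K\ast G)^{\op}$, so $\{\bar t^n : n\geq 0\}$ denotes the same subset of $D=D^{\op}$ in both settings, and a right $D^{\op}(K^{\op},N)$-linear combination $\sum_n\bar t^n\cdot_{\op}d_n$ equals $\sum_n d_n\bar t^n$ when computed in $D$. Therefore $\{\bar t^n : n\geq 0\}$ is right $D^{\op}(K^{\op},N)$-linearly independent in $D^{\op}$ exactly when it is left $D(K,N)$-linearly independent in $D$. Combined with the previous paragraph, this says that $(K\ast G)^{\op}\hookrightarrow D^{\op}$ is Hughes-free if and only if, for each such triple $(H,N,t)$, the set $\{\bar t^n : n\geq 0\}$ is \emph{left} $D(K,N)$-linearly independent in $D$.

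The one step with real content is the left--right symmetry. Fix $(H,N,t)$ as above and put $E=D(K,N)$. Since $N\trianglelefteq H$ and $t\in H$, Remark~\ref{rem:1} gives that conjugation by $\bar t$ restricts to an automorphism of $E$, so $\bar t^{-n}E\bar t^{n}=E$ for every $n\geq 0$. If $\{\bar t^n : n\geq 0\}$ is right $E$-linearly independent in $D$ and $\sum_{n=0}^m d_n\bar t^n=0$ with $d_n\in E$, I would rewrite this as $\sum_{n=0}^m\bar t^n\bigl(\bar t^{-n}d_n\bar t^{n}\bigr)=0$; the coefficients $\bar t^{-n}d_n\bar t^{n}$ lie in $E$, so they all vanish, forcing $d_n=0$ for all $n$. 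Hence right linear independence implies left linear independence, and the converse follows by the symmetric argument (conjugating the other way). Together with the translation above, this proves the lemma. The hard part, such as it is, is exactly this symmetry; the remainder is routine matching of opposite-ring data, where I foresee no real difficulty.
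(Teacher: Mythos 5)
Your proposal is correct and follows essentially the same route as the paper: identify $(K\ast G)^{\op}$ with $K^{\op}\ast G^{\op}$ via Lemma~\ref{le:oppcp}, observe that the opposite Hughes-free condition amounts to left $D(K,N)$-linear independence of the powers of $\bar t$ in $D$, and convert left to right independence by rewriting $d_n\bar t^n=\bar t^n(\bar t^{-n}d_n\bar t^n)$ with the conjugated coefficients lying in $D(K,N)$ by Remark~\ref{rem:1}. The only difference is cosmetic: you spell out the bookkeeping (same triples $(H,N,t)$, same powers of $\bar t$, both directions of the equivalence) that the paper leaves implicit.
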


\begin{proof}
  By Lemma~\ref{le:oppcp}, $(K\ast G)^{\op}=K^{\op}\ast G^{\op}$.
  It follows that if $N$ is a subgroup of $G$, then
  $D(K,N)^{\op}=D^{\op}(K^{\op},N^{\op})$. Given a finitely generated
  subgroup $H^{\op}$ of $G^{\op}$ and a normal subgroup $N^{\op}$ of
  $H^{\op}$ such that $H^{\op}/N^{\op}$ is infinite cyclic, pick
  $t\in H$ such that $H^{\op}/N^{\op}=\langle N^{\op}t\rangle$. We must
  prove that $\{\bar{t}^n : n\geq 0\}$ is a right $D(K,N)^{\op}$-linearly
  independent subset of $D^{\op}$, or equivalently, that it is
  a left $D(K,N)$-linearly independent subset of $D$. So suppose that
  $d_0,d_1,\dotsc,d_n\in D(K,N)$ are such that
  $$
  0=d_0+d_1\bar{t}+\dotsb+d_n\bar{t}^n=
  d_0+\bar{t}(\bar{t}^{-1}d_1\bar{t})+\dotsb+\bar{t}^n(\bar{t}^{-n}d_n\bar{t}^n).
  $$
  By Remark~\ref{rem:1}, $\bar{t}^{-i}d_i\bar{t}\in D(K,N)$ for all $i=1,\dotsc,n$.
  Then, Hughes-freeness of $K\ast G\hookrightarrow D$ implies that
  $\bar{t}^{-1}d_i\bar{t}=0$, and therefore $d_i=0$, for all $i=0,\dotsc,n$.
\end{proof}

\begin{theorem}\label{teo:extension}
  Let $K$ be a division ring, let $G$ be a locally indicable group
  and let $K\ast G$ be a crossed product. Suppose that $K\ast G$
  has a Hughes-free division ring of fractions $D$. Then any
  involution on $K\ast G$ extends to a unique involution on $D$.
\end{theorem}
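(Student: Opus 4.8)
The plan is to read an involution on $K\ast G$ as a ring isomorphism onto the opposite ring and then transport it to $D$ using the uniqueness of Hughes-free division rings of fractions (Theorem~\ref{teo:hughesI}).

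First I would fix an involution ${}^{\star}$ on $K\ast G$ and regard it as a ring isomorphism ${}^{\star}\colon K\ast G\to (K\ast G)^{\op}$. By Lemma~\ref{le:oppcp}, $(K\ast G)^{\op}=K^{\op}\ast G^{\op}$ is a crossed product of $G^{\op}$ over $K^{\op}$, and $G^{\op}$ is locally indicable (being isomorphic to $G$ via $x\mapsto x^{-1}$). By Lemma~\ref{le:opp}, the embedding $(K\ast G)^{\op}\hookrightarrow D^{\op}$ is Hughes-free, so $D^{\op}$ is a Hughes-free division ring of fractions of $(K\ast G)^{\op}$. Composing ${}^{\star}$ with this embedding yields an embedding $\iota'\colon K\ast G\hookrightarrow D^{\op}$ whose image generates $D^{\op}$; since $K\ast G$ and $(K\ast G)^{\op}$ are isomorphic rings, Lemma~\ref{le:isohf} shows that $\iota'$ is Hughes-free, i.e.\ $D^{\op}$ is a Hughes-free division ring of fractions of $K\ast G$ as well.

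Next, applying Theorem~\ref{teo:hughesI} to the two Hughes-free division rings of fractions $D$ (via the given embedding $\iota$) and $D^{\op}$ (via $\iota'$) of $K\ast G$, I obtain a unique ring isomorphism $\varphi\colon D\to D^{\op}$ with $\varphi\iota=\iota'$, that is, $\varphi$ restricts to ${}^{\star}$ on $K\ast G$. Viewed as a self-map of the underlying set of $D$, $\varphi$ is then a ring anti-automorphism extending ${}^{\star}$, which I again denote by ${}^{\star}$. It remains to verify ${}^{\star}\circ{}^{\star}=\Id_D$: this composite is a ring automorphism of $D$ restricting to the identity on $K\ast G$ (because ${}^{\star}$ is an involution there), as does $\Id_D$; by the uniqueness clause of Theorem~\ref{teo:hughesI} applied with $D_1=D_2=D$ and the embedding $\iota$, the two coincide, so ${}^{\star}$ is an involution on $D$ extending the given one.

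Finally, uniqueness is immediate: any involution on $D$ extending ${}^{\star}$ is in particular a ring isomorphism $D\to D^{\op}$ restricting to $\iota'$ on $K\ast G$, hence equals $\varphi$ by the uniqueness in Theorem~\ref{teo:hughesI}. The only delicate point is the bookkeeping between a ring and its opposite — specifically, checking that the embedding of $K\ast G$ into $D^{\op}$ built from ${}^{\star}$ is genuinely Hughes-free — but this is precisely what Lemmas~\ref{le:oppcp}, \ref{le:opp} and \ref{le:isohf} were arranged to provide, so no substantial obstacle remains.
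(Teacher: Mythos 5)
Your proposal is correct and follows essentially the same route as the paper: view ${}^{\star}$ as an isomorphism onto $(K\ast G)^{\op}$, use Lemmas~\ref{le:oppcp}, \ref{le:opp} and \ref{le:isohf} to see that the induced embedding into $D^{\op}$ is Hughes-free, and then invoke Theorem~\ref{teo:hughesI}. In fact you spell out two points the paper leaves implicit — that the extension squares to $\Id_D$ and that it is the unique extending involution, both via the uniqueness clause of Theorem~\ref{teo:hughesI} — so nothing is missing.
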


\begin{proof}
  Let ${}^{\star}\colon K\ast G\to K\ast G$ be an involution on
  $K\ast G$. Then the map
  $$
  \begin{array}{rcl}
  \phi\colon K\ast G & \longrightarrow & (K\ast G)^{\op}\\
  \alpha & \longmapsto & \alpha^{\star}
  \end{array}
  $$
  is a ring isomorphism. By Lemma~\ref{le:oppcp}, $(K\ast G)^{\op}
  =K^{\op}\ast G^{\op}$ and, by Lemma~\ref{le:opp},
  $K^{\op}\ast G^{\op}\hookrightarrow D^{\op}$ is a Hughes-free
  embedding. But then Lemma~\ref{le:isohf} implies that
  the embedding of $K\ast G$ into $D^{\op}$ through $\phi$ is
  Hughes-free. So it follows from Theorem~\ref{teo:hughesI}, that
  there exists a ring isomorphism $\Phi\colon D\to D^{\op}$
  extending $\phi$. Hence, the map $D\to D$, defined by
  $\zeta\mapsto \Phi(\zeta)$, for all $\zeta\in D$, is an involution
  on $D$ extending ${}^{\star}$.
\end{proof}

\section{The general case: free symmetric generators}\label{sec:gen-free}

Here we go back to our original problem, that of searching for
free symmetric elements in a division ring generated by a
group over a field and endowed with an involution.

Let $\field$ be a field and let $G$ be a locally indicable group.
Suppose that the group ring $\field[G]$ has a Hughes-free division
ring of fractions $D$. By Theorem~\ref{teo:extension}, $D$ has an
involution ${}^{\star}$ extending the  canonical
$\field$-involution on $\field[G]$, which will also
be called the canonical involution on $D$.

Our main result, Theorem~\ref{teo:freegen}, is stated for
$\delta$-poly-orderable groups. To prove it, we need two
propositions which are valid for more general locally indicable
groups.

\begin{proposition}\label{prop:freemon}
  Let $\field$ be a field and let $G$ be a locally indicable group.
  Suppose that the group algebra $\field[G]$ has a Hughes-free division ring
  of fractions $D$. If $G$ contains a free monoid of rank $2$, then $D$
  contains a free group $\field$-algebra of rank $2$ freely generated by
  symmetric elements with respect to the canonical involution on $D$.
\end{proposition}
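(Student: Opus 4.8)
The plan is to reproduce, in the present generality, the strategy of Theorem~\ref{teo:1}: reduce the situation to a skew polynomial ring over a division ring, apply the Bell--Rogalski criterion together with Cohn's result on free subalgebras to obtain symmetric free associative generators, and finally invoke the valuation criterion of \cite{aL84} to pass from a free associative algebra to a free group $\field$-algebra.

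First I would exploit local indicability together with the free monoid. Let $a,b\in G$ generate a free submonoid of rank $2$ and put $H=\langle a,b\rangle$. Since $H$ is finitely generated and locally indicable, there is a surjection $\chi\colon H\to\Z$ with kernel $N$; fix $t\in H$ with $H/N=\langle tN\rangle$. By Remark~\ref{rem:1}, conjugation by $t$ restricts to an automorphism $\sigma$ of $D(\field,N)$, and Hughes-freeness makes $\{t^n:n\geq 0\}$ right $D(\field,N)$-linearly independent. Consequently $D(\field,H)$ is the Ore field of fractions of the skew polynomial ring $D(\field,N)[t;\sigma]$ (with $t$ now playing the role of $X$ in Theorem~\ref{teo:bell}), and the $t$-adic valuation $\nu$, zero on $D(\field,N)$ with $\nu(t)=1$, is well defined on $D(\field,H)$. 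Since $t^{\star}=t^{-1}$ and $t$ commutes with $1-t$, the computation from the proof of Theorem~\ref{teo:1} shows verbatim that $\gamma=\alpha^{2}-\alpha=t(1-t)^{-2}$, where $\alpha=(1-t)^{-1}$, is symmetric and satisfies $\nu(\gamma)=1$.

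The heart of the argument is to produce a symmetric $\beta\in D(\field,N)$ to which a division-ring version of the Bell--Rogalski criterion (the general form of \cite[Theorem~2.2]{BRpp}) applies, namely $\beta\notin D(\field,N)_{\sigma}$ and, for all $f\in D(\field,N)$, $f^{\sigma}-f\in D(\field,N)_{\sigma}+D(\field,N)_{\sigma}\beta$ forces $f\in D(\field,N)_{\sigma}$. This is the noncommutative analogue of Lemma~\ref{le:lorenz}, and it is here that the freeness of $\langle a,b\rangle^{+}$ must enter: it guarantees that the pair $(D(\field,N),\sigma)$ is large enough, i.e.\ that $\sigma$ moves elements of $D(\field,N)$ in sufficiently many independent ways, to exhibit such a $\beta$. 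Moreover $\beta$ can be taken symmetric, because the canonical involution stabilises $D(\field,N)$ and commutes with $\sigma$ (one checks $\sigma(d)^{\star}=\sigma(d^{\star})$), so that it preserves both $D(\field,N)_{\sigma}$ and the set of symmetric elements. I expect this step to be the main obstacle, since it must replace the transparent pole-counting of Lemma~\ref{le:lorenz} by an argument valid over a noncommutative coefficient division ring.

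Granting such a $\beta$, Theorem~\ref{teo:bell} gives that $\alpha$ and $\beta\alpha$ freely generate a free $\field$-subalgebra; then, exactly as in the proof of Theorem~\ref{teo:1}, Cohn's \cite[Corollary~6.7.4]{pC06} yields that $\gamma$ and $\delta=\gamma\beta\gamma$ freely generate a free $\field$-subalgebra. As $\gamma$ and $\beta$ are symmetric, $\delta^{\star}=\gamma^{\star}\beta^{\star}\gamma^{\star}=\delta$, so both generators are symmetric, and $\nu(\gamma)=1$, $\nu(\delta)=2+\nu(\beta)=2$. Finally, \cite[Corollary~1 on p.~524]{aL84} converts this pair of free associative generators of positive, distinct $\nu$-values into a free group $\field$-algebra, so that $1+\gamma$ and $1+\delta$ are symmetric elements freely generating a free group $\field$-algebra inside $D(\field,H)\subseteq D$, which is precisely the assertion of the proposition.
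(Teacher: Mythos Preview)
Your proposal has a genuine gap at precisely the point you identify as ``the main obstacle'': you never actually produce the symmetric element $\beta\in D(\field,N)$ nor prove that it satisfies the Bell--Rogalski hypothesis. Saying that the freeness of the monoid ``must enter'' here and that this step ``must replace the transparent pole-counting of Lemma~\ref{le:lorenz}'' is a description of what would need to be done, not a proof. The pole argument of Lemma~\ref{le:lorenz} relies entirely on the fact that the coefficient ring is a rational function field in one variable over a field; there is no evident analogue over the noncommutative division ring $D(\field,N)$, about which you know essentially nothing beyond the action of $\sigma$. Moreover, the version of Theorem~\ref{teo:bell} stated in the paper requires $F$ to be a field, so you would also need to invoke and verify a genuine division-ring extension of the Bell--Rogalski criterion.

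The paper's proof avoids all of this by taking a completely different and much more elementary route: it never uses Bell--Rogalski or a Lorenz-type lemma at all. Instead, after arranging (by replacing $x,y$ with $x^{-1},y^{-1}$ and $y$ with $yx^{k}$ if necessary) that $\nu(x)>0$ and $\nu(y)>0$ while $x,y$ still generate a free monoid, it sets $f=x+x^{-1}$ and $g=y+y^{-1}$ (manifestly symmetric) and works directly with $f^{-1}=x(1+x^{2})^{-1}$ and $g^{-1}=y(1+y^{2})^{-1}$. The key observation is that in $D(\field,N)((t;\sigma))$ the leading term of $f^{-1}$ is $x$ and that of $g^{-1}$ is $y$; hence the leading term of any word $w(f^{-1},g^{-1})$ is $w(x,y)$. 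Freeness of the monoid generated by $x,y$ then gives both that distinct words in $f^{-1},g^{-1}$ are distinct and, by a minimal-counterexample valuation argument, that they are $\field$-linearly independent. Since $\nu(f^{-1}),\nu(g^{-1})>0$, Lichtman's criterion finishes the job with the symmetric generators $1+f^{-1}$ and $1+g^{-1}$. The free monoid is used directly and combinatorially, not via any analogue of Lemma~\ref{le:lorenz}.
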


\begin{proof}
  Here we simplify the notation introduced in Section~\ref{sec:gen-ext}
  and given a subgroup $N$ of $G$ write $D(N)$ for the division
  subring of $D$ generated by $\field[N]$.

  Let $x,y\in G$ be elements that freely generate a free submonoid
  of $G$ and let $H$ be the subgroup of $G$ generated by $x$ and $y$.
  We shall prove that $D(H)$ contains a free group $\field$-algebra
  freely generated by symmetric elements.

  Since $G$ is locally indicable, there exists a subgroup $N$ of $H$
  such that $H/N$ is infinite cyclic. Let $t\in H$ be such that
  $tN$ generates $H/N$. By Remark~\ref{rem:1}, there exists an
  automorphism $\sigma$ of $D(N)$ extending the inner automorphism
  of $\field[H]$ given by conjugation by $t$. Since
  $\field[G]\hookrightarrow D$ is Hughes-free, the powers of $t$
  are right linearly independent over $D(N)$. Hence,
  $D(N)[t;\sigma]$ embeds into $D(H)$. Thus, $D(H)=D(N)(t;\sigma)$.
  Now let $\nu$ denote the discrete valuation on $D(H)$ extending
  the $t$-adic valuation on $D(N)[t;\sigma]$. In other words, the
  division ring $D(H)$ has a discrete valuation $\nu$ such that
  $$
  \nu\left(\sum_{n\in\Z}t^nf_n\right) = \min\{n : f_n\ne 0\},
  $$
  for all finite sums of the form $\sum_{n\in\Z}t^nf_n$, with
  $f_n\in D(N)$.

  Since $H/N$ is infinite cyclic, there exist unique $m,n\in\Z$
  and $u,v\in N$ such that $x=t^mu$ and $y=t^nv$. So $\nu(x)=m$ and
  $\nu(y)=n$, and $m\ne0 $ or $n\ne 0$ because $H\ne N$. Suppose
  $m\ne 0$. Now, if $m<0$, substitute the pair $\{x,y\}$ for the
  pair $\{x^{-1},y^{-1}\}$ and, then, substituting $y$ for $yx^k$, for an
  appropriate choice of $k$, if necessary, we can assume that
  \begin{enumerate}[(i)]
  \item $x$ and $y$ freely generate a free monoid in $G$, and
  \item $\nu(x)>0$ and $\nu(y)>0$.
  \end{enumerate}

  Consider the following elements of $\field[H]$, $f=x+x^{-1}$
  and $g=y+y^{-1}$. They are both symmetric and, thus, their
  inverses $f^{-1},g^{-1}$ are symmetric elements in $D(H)$.
  We shall regard the elements of $D(H)$ as series via the
  embedding of $D(H)=D(N)(t;\sigma)$ into the skew Laurent
  series ring $D(N)((t;\sigma))$.

  We have
  \begin{equation}\label{eq:f-1}
  f^{-1}=(x+x^{-1})^{-1} = \big((x^2+1)x^{-1}\bigr)^{-1}
  =x(1+x^2)^{-1}.
  \end{equation}
  And, similarly, $g^{-1}=y(1+y^2)^{-1}$. We claim that
  $v(f^{-1})>0$. Indeed, since $\nu(x)>0$, then expressing
  $1+x^2$ as a polynomial in $t$, we see that $\nu(1+x^2)=0$,
  which implies $\nu(f^{-1})=\nu(x)-\nu(1+x^2)=\nu(x)>0$.
  Analogously, $\nu(g^{-1})>0$.

  First we show that $f^{-1}$ and $g^{-1}$ freely generate a free monoid
  in $D(H)$. Suppose that there exist non-negative
  integers $a_1,b_1, \dotsc, a_r,b_r$ and $c_1,d_1,\dotsc, c_s,d_s$ such that
  \begin{equation}\label{eq:fm}
  (f^{-1})^{a_1}(g^{-1})^{b_1}\dotsm(f^{-1})^{a_r}(g^{-1})^{b_r} =
  (g^{-1})^{c_1}(f^{-1})^{d_1}\dotsm(g^{-1})^{c_s}(g^{-1})^{d_s}.
  \end{equation}
  If follows from \eqref{eq:f-1} that, as elements of $D(N)((t;\sigma))$,
  $f^{-1}$ and $g^{-1}$ are
  written in a unique way as
  \begin{equation}\label{eq:sf-1}
  f^{-1} = t^{m}u + \sum_{i\geq m+1} t^i \alpha_i
  \quad\text{and}\quad
  g^{-1} = t^{n}v+\sum_{j\geq n+1} t^j \beta_j,
  \end{equation}
  with $\alpha_i,\beta_j\in D(N)$. But, then, \eqref{eq:fm}
  reads
  \begin{multline}\label{eq:tm}
  (t^mu)^{a_1}(t^nv)^{b_1}\dotsm(t^mu)^{a_r}(t^nv)^{b_r}
  + \sum_{i>m(a_1+\dotsb+a_r)+n(b_1+\dotsb+b_r)} t^i\gamma_i = \\
  (t^nv)^{c_1}(t^mu)^{d_1}\dotsm(t^nv)^{c_r}(t^mu)^{d_r}
  + \sum_{j>m(d_1+\dotsb+d_s)+n(c_1+\dotsb+c_s)} t^j\delta_j,
  \end{multline}
  with $\gamma_i,\delta_j\in D(N)$. Now \eqref{eq:tm} implies
  a relation of the form
  $$
  x^{a_1}y^{b_1}\dotsm x^{a_r}y^{b_r} =
  y^{c_1}x^{d_1}\dotsm y^{c_s}x^{d_s},
  $$
  which must be trivial.

  Now we must show that monomials in $f^{-1}$ and $g^{-1}$ are
  linearly independent over $\field$. We again regard
  $D(H)=D(N)(t;\sigma)\subseteq D(N)((t;\sigma))$, endowed with
  the $t$-adic valuation. By what we have just seen,
  given a monomial $w(f^{-1},g^{-1})$ in $f^{-1}$ and $g^{-1}$,
  there exists $h\in\ D(N)((t;\sigma))$ such that
  $$
  w(f^{-1},g^{-1}) = w(x,y) + h,
  \quad\text{with $\nu(h)>\nu\bigl(w(x,y)\bigr)=
  \nu\bigl(w(f^{-1},g^{-1})\bigr)$}.
  $$
  Suppose that the monomials in $f^{-1}$ and $g^{-1}$ are not
  linearly independent over $\field$ and pick a linear dependence
  relation
  $$
  \sum_{i=1}^r w_i(f^{-1},g^{-1})\lambda_i=0,
  $$where
  $w_i(f^{-1},g^{-1})$ are monomials in $f^{-1}$ and $g^{-1}$,
  and $\lambda_i\in\field^{\times}$, with $r$ minimal. Write $I=\{1,\dotsc, r\}$ and
  let $q=\min\bigl\{\nu\bigl(w_i(f^{-1},g^{-1})\bigr) : i\in I\bigr\}$.
  Let $J=\bigl\{i\in I : \nu\bigl(w_i(f^{-1},g^{-1})\bigr)=q\bigr\}$.
  For each $j\in J$, let $h_j\in D(N)((t;\sigma))$ be such that
  $w_j(f^{-1},g^{-1}) = w_j(x,y) + h_j$ and $\nu(h_j)>q$. We have
  \begin{equation*}\begin{split}
  0 & = \sum_{i\in I}w_i(f^{-1},g^{-1})\lambda_i
      = \sum_{j\in J}w_j(f^{-1},g^{-1})\lambda_j
        + \sum_{i\in I\setminus J}w_i(f^{-1},g^{-1})\lambda_i \\
    & = \sum_{j\in J} w_j(x,y)\lambda_j + \sum_{j\in J}h_j\lambda_j
        + \sum_{i\in I\setminus J}w_i(f^{-1},g^{-1})\lambda_i.
  \end{split}\end{equation*}
  Now, $\nu\left(\sum_{j\in J}h_j\lambda_j + \sum_{i\in I\setminus J}
  w_i(f^{-1},g^{-1})\lambda_i\right)> q$. So we must have
  $\sum_{j\in J}w_j(x,y)\lambda_j =0$.
  But since $w_j(x,y)\in H$ and the elements of $H$ are linearly
  independent over $\field$, it follows that $\lambda_j=0$ for
  all $j\in J$. If $J\ne I$, we would have a nontrivial $\field$-linear dependence relation
  among monomials in $f^{-1}$ and $g^{-1}$ with less than $r$ terms,
  which is impossible. Thus, $\lambda_i=0$ for all $i\in I$.

  Finally, it follows from \cite[Corollary~1 on p.~524]{aL84}
  that $1+f^{-1}$ and $1+g^{-1}$, which are symmetric elements of
  $D$, freely generate a free group $\field$-subalgebra of $D(H)$
  and, hence, of $D$.
\end{proof}

It is worth mentioning at this point that a locally indicable
group with at least one right order which is not of Conrad type
contains a free monoid of rank $2$ (cf.~\cite[Corollary~2.6]{RR02}).

\medskip

We shall need the following result from I.~Hughes which, although
not explicitly stated there, can be extracted from \cite{iH72} or
\cite{jS08}.

\begin{lemma}\label{le:hughesII}
  Let $G$ be a locally indicable group with a normal subgroup $N$ such
  that $G/N$ is locally indicable. Let $K$ be a division ring and let
  $K\ast G$ be a crossed product. If $K\ast N$ has a Hughes-free division ring
  of fractions $D$ and $G/N$ is Hughes-free embeddable, then $K\ast G$ has
  a Hughes-free division ring of fractions. More precisely, regarding
  $K\ast G = (K\ast N)\ast(G/N) \hookrightarrow D\ast(G/N)$, if $E$ is
  a Hughes-free division ring of fractions of $D\ast(G/N)$, then
  $E$ is a Hughes-free division ring of fractions of $K\ast G$. \qed
\end{lemma}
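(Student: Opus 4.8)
The plan is to build the candidate division ring explicitly and then check the Hughes-free condition against it. Realize $K\ast G$ as an iterated crossed product $K\ast G=(K\ast N)\ast(G/N)$: choosing a transversal $T$ of $N$ in $G$ with $1\in T$, the set $\{\bar t:t\in T\}$ is a $(K\ast N)$-basis of $K\ast G$ and the resulting structure maps $\sigma\colon G/N\to\Aut(K\ast N)$ and $\tau\colon (G/N)\times(G/N)\to U(K\ast N)$ are read off from the $G$-crossed-product data (conjugation by, and products of, the $\bar t$). Since $K\ast N\hookrightarrow D$ is Hughes-free, Proposition~\ref{cor:teohughes} extends each $\sigma(gN)$ uniquely to an automorphism of $D$, and the uniqueness clause forces the identity $\sigma(g_1N)\,\sigma(g_2N)=\mathrm{inn}_{\tau(g_1N,g_2N)}\circ\,\sigma(g_1g_2N)$ (and the $\tau$-cocycle identity) to persist on $D$, as both sides are automorphisms of $D$ extending the same automorphism of $K\ast N$. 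Hence $D\ast(G/N)$ is a genuine crossed product of the locally indicable group $G/N$ over the division ring $D$, it contains $K\ast G$, and, $G/N$ being Hughes-free embeddable, it admits a Hughes-free division ring of fractions $E$.

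Next I would verify that $E$ is a division ring of fractions of $K\ast G$ and record the identity $E(K,N)=D$: the latter holds because $D$ is already a division ring containing $K\ast N$, so it coincides with the division subring of $E$ it generates; the former follows since $E$ is generated as a division ring by $D\ast(G/N)$, the basis elements $\bar t$ ($t\in T$) lie in $K\ast G$, and $D$ is generated as a division ring by $K\ast N\subseteq K\ast G$. It remains to show that $K\ast G\hookrightarrow E$ is Hughes-free. Fix a nontrivial finitely generated $H\le G$, a normal subgroup $M\triangleleft H$ with $H/M$ infinite cyclic, and $t\in H$ with $H/M=\langle tM\rangle$. Replacing $t$ by another generator $tm$ ($m\in M$) changes each $\bar t^{\,n}$ only by a unit of $(K\ast M)^{\times}\subseteq E(K,M)^{\times}$, in a way that preserves the $\bar t$-degree and hence does not affect linear independence over $E(K,M)$; I will use this freedom below. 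I also record, writing $\pi\colon G\to G/N$, that every $\bar m$ ($m\in M$) equals $\bar\theta d$ with $\theta$ the transversal representative of $mN\in\pi(M)$ and $d\in K\ast N\subseteq D$, so $K\ast M\subseteq D\ast\pi(M)$ and therefore $E(K,M)\subseteq E(D,\pi(M))$; and that $\bar t^{\,n}=\bar s^{\,n}c_n$ for suitable $c_n\in D^{\times}$, where $\bar s$ is the basis element of $D\ast(G/N)$ attached to $\pi(t)$.

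Now I would split into two cases. If $\pi(t)\notin\pi(M)$, then $\pi(H)$ is a nontrivial finitely generated subgroup of $G/N$, $\pi(M)\triangleleft\pi(H)$, and $\pi(H)/\pi(M)=\langle\pi(t)\pi(M)\rangle$ is infinite cyclic; a relation $\sum_n\bar t^{\,n}d_n=0$ with $d_n\in E(K,M)$ then rewrites, via $E(K,M)\subseteq E(D,\pi(M))$ and $\bar t^{\,n}=\bar s^{\,n}c_n$, as $\sum_n\bar s^{\,n}(c_nd_n)=0$ with $c_nd_n\in E(D,\pi(M))$, and the Hughes-freeness of $D\ast(G/N)\hookrightarrow E$ forces $c_nd_n=0$, hence $d_n=0$; so this case is done. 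If instead $\pi(t)\in\pi(M)$, then $t\in MN$, and one may replace $t$ by a generator of $H/M$ lying in $H\cap N$, so that $\bar t\in D$; now $\pi$ annihilates the cyclic quotient, the Hughes-freeness of $D\ast(G/N)$ gives nothing, and one must instead exploit the Hughes-freeness of $K\ast N\hookrightarrow D$, through the way $E(K,M)$ sits relative to $D$ and to the basis layer of $D\ast(G/N)$ over $\pi(M)$. I expect this second case to be the genuine obstacle: the subgroup of $N$ that controls the powers of $\bar t$, namely $(M\cap N)\langle t\rangle$, can fail to be finitely generated even after shrinking $H$ to be finitely generated, so a naive finite-support reduction does not close the argument. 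One handles it exactly as in Hughes's construction in \cite{iH72} (equivalently \cite{jS08}): the transitivity of Hughes-freeness along the normal subgroup $N$, proved there by a filtration argument rather than by reduction to the finitely generated case, and using, in the situations relevant to this paper, that subgroups of orderable (resp.\ $\delta$-poly-orderable) groups are again orderable (resp.\ $\delta$-poly-orderable), hence Hughes-free embeddable. That is precisely the content being quoted in the statement.
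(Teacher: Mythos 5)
Your construction of the chain $K\ast G=(K\ast N)\ast(G/N)\subseteq D\ast(G/N)\subseteq E$ --- including the use of Proposition~\ref{cor:teohughes} to extend the structural automorphisms of $K\ast N$ to $D$, and the check that $E$ is a division ring of fractions of $K\ast G$ --- is fine and agrees with the framing already built into the statement. The trouble lies in the verification of Hughes-freeness of $K\ast G\hookrightarrow E$, where your argument has two holes. First, your case split is not exhaustive as argued: from $\pi(t)\notin\pi(M)$ you conclude that $\pi(H)/\pi(M)=\langle\pi(t)\pi(M)\rangle$ is infinite cyclic, but $\pi(H)/\pi(M)$ is only a quotient of $H/M\cong\Z$ and can be finite nontrivial even though $G/N$ is torsion-free. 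For instance, take $G=\Z^2=\langle a\rangle\times\langle b\rangle$, $N=\langle a\rangle$, $H=G$, $M=\langle ab^2\rangle$, $t=b$: then $H/M\cong\Z$ is generated by $tM$, $\pi(t)\notin\pi(M)$, yet $\pi(H)/\pi(M)\cong\Z/2$. In the subcase where $\pi(t)$ has finite order $\geq 2$ modulo $\pi(M)$ you can neither invoke Hughes-freeness of $D\ast(G/N)\hookrightarrow E$ (which requires an infinite cyclic quotient) nor perform your case-two replacement of $t$ by a generator lying in $H\cap N$ (which requires $t\in MN$), so that configuration is simply not covered by your deduction.

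Second, and more fundamentally, the case you yourself identify as the crux --- the generator of $H/M$ can be taken in $N$, so $\bar t\in D$ while $E(K,M)$ need not sit inside $D$ --- is not proved at all: you dispose of it by appealing to ``transitivity of Hughes-freeness along $N$'' as established in \cite{iH72} (or \cite{jS08}), which is precisely the content of Lemma~\ref{le:hughesII}; as a self-contained argument this is circular. To be fair, the paper gives no proof either: the lemma is stated with a \emph{qed} and the remark that it can be extracted from \cite{iH72} or \cite{jS08}. So if quoting Hughes is allowed, the citation already covers the whole statement and your preliminary reductions are not needed; if an actual proof is wanted, then both the overlooked finite-order subcase and the genuinely hard case remain open in your write-up, and they cannot be dispatched by the leading-term and valuation tricks used elsewhere in the paper --- one really does need the inductive machinery of \cite{iH72} or \cite{jS08}.
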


In what follows, given a field 
$\field$ and a locally indicable group $G$ if the group algebra
$\field[G]$ has a Hughes-free division ring of fractions,
this division ring will be denoted by $\field(G)$. It follows
from Theorem~\ref{teo:extension} that the canonical involution
of $\field[G]$ extends to an involution on $\field(G)$.  
The next result is a more general version of
\cite[Proposition~3.4]{jSpp}.

\begin{proposition}\label{prop:nilpotent}
  Let $A$ be a locally indicable group generated by two elements
  and let $B$ be normal subgroup of $A$ such that $A/B$ is a torsion-free nilpotent
  group of class $2$. Let $\field$ be a field
  and suppose that the group algebra $\field[B]$ has a Hughes-free
  division ring of fractions. Then $\field[A]$ has a Hughes-free
  division ring of fractions $\field(A)$ which contains
  a free group $\field$-algebra of rank $2$ freely generated by symmetric
  elements with respect to the canonical involution.
\end{proposition}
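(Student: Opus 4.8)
The plan is to follow the strategy of the proof of Theorem~\ref{teo:1}: pass to a skew Laurent representation of $\field(A)$, exhibit symmetric elements $\gamma,\delta$ with prescribed valuations which freely generate a free $\field$-subalgebra, and then apply Lichtman's criterion \cite[Corollary~1 on p.~524]{aL84} to conclude that $1+\gamma$ and $1+\delta$ freely generate a free group $\field$-algebra. Since $A/B$ is torsion-free nilpotent it is orderable, hence locally indicable and Hughes-free embeddable, so Lemma~\ref{le:hughesII} applied to the normal subgroup $B$ of $A$ (with $\field[A]=\field[B]\ast(A/B)$) gives a Hughes-free division ring of fractions $\field(A)$ of $\field[A]$, to which the canonical involution extends by Theorem~\ref{teo:extension}. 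Next I would observe that a $2$-generated torsion-free nilpotent group of class $2$ is isomorphic to the free one $G_0=\langle x,y:[[x,y],x]=[[x,y],y]=1\rangle$: its Malcev completion is the rational Heisenberg group, which forces Hirsch length $3$, and a $2$-generated group mapping onto a torsion-free Hirsch-length-$3$ quotient of $G_0$ must do so isomorphically. Replacing the generating pair of $A$ by a suitable one, I may therefore assume that the images $\bar x,\bar y\in A/B$ of $x,y$ are the standard generators of $G_0$, that $c=[x,y]$ maps to a generator $\bar c$ of $(A/B)'\cong\Z$, and that $y^{-1}xy=xc$ in $A$ (the last being just the definition of $c$).

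Set $C=\langle B,c\rangle$ and $N=\langle B,x,c\rangle$. Both are normal in $A$, because their images in $A/B\cong G_0$ are the normal subgroups $\langle\bar c\rangle$ and $\langle\bar x,\bar c\rangle$, and one has $C/B\cong\Z$, $N/C\cong\Z$ and $A/N\cong\Z$, the last generated by the image of $y$. Iterating Lemma~\ref{le:hughesII} and using uniqueness of Hughes-free fractions (Theorem~\ref{teo:hughesI}), one obtains Hughes-free division rings of fractions $\field(C)\subseteq\field(N)\subseteq\field(A)$ realized inside $\field(A)$, with $\field(N)=\field(C)(x;\tau)$ for $\tau$ conjugation by $x$, and, exactly as in the proof of Proposition~\ref{prop:freemon}, $\field(A)=\field(N)(y;\rho)\hookrightarrow\field(N)((y;\rho))$ for $\rho$ conjugation by $y$; let $\nu$ be the resulting $y$-adic valuation on $\field(A)$. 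Now set $\gamma=y(1-y)^{-2}$ and $\delta=y(1-y)^{-2}\,x(1-x)^{-2}\,y(1-y)^{-2}$. The computations in the proof of Theorem~\ref{teo:1}, which use only $y^{\star}=y^{-1}$ and $x^{\star}=x^{-1}$, show that $\gamma$ and $\delta$ are symmetric for the canonical involution on $\field(A)$, and since $x\in N$ one reads off $\nu(\gamma)=1$ and $\nu(\delta)=2$. By \cite[Corollary~1 on p.~524]{aL84} it then remains only to prove that $\gamma$ and $\delta$ freely generate a free $\field$-subalgebra of $\field(A)$.

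For this I would first prove the analogue of Proposition~\ref{prop:2}: that $(1-y)^{-1}$ and $x(1-x)^{-2}(1-y)^{-1}$ freely generate a free $\field$-subalgebra of $\field(A)=\field(N)(y;\rho)$; then \cite[Corollary~6.7.4]{pC06} lets one pass to $\alpha^{2}-\alpha$ and $\beta(\alpha^{2}-\alpha)$, with $\alpha=(1-y)^{-1}$ and $\beta=x(1-x)^{-2}$, and hence to $\gamma=\alpha^{2}-\alpha$ and $\delta=(\alpha^{2}-\alpha)\beta(\alpha^{2}-\alpha)$, precisely as in the proof of Theorem~\ref{teo:1}. The key free-generation statement should come from the criterion of Bell and Rogalski \cite{BRpp}, in a form admitting a division ring of coefficients, applied with $F=\field(N)$, automorphism $\rho$, and $b=x(1-x)^{-2}$: one must check that $b\notin\field(N)_{\rho}$ and that $\rho(f)-f\in\field(N)_{\rho}+\field(N)_{\rho}b$ forces $f\in\field(N)_{\rho}$ for every $f\in\field(N)$. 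The usable structural fact is that, because $\rho(x)=xc$ with $c\in\field(C)\subseteq\field(N)$ of $x$-degree $0$ and $\rho(\field(C))=\field(C)$, the automorphism $\rho$ preserves the $x$-adic valuation $\mu$ on $\field(N)=\field(C)(x;\tau)$; comparing degree-one leading coefficients immediately gives $\rho(b)\neq b$, and the required implication should follow from a non-commutative version of Lorenz's argument (Lemma~\ref{le:lorenz}), tracking $\mu$-orders and leading coefficients in $\field(C)$ in place of poles of rational functions.

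The main obstacle is exactly this last verification. In Theorem~\ref{teo:1} the coefficient ring was the rational function field $\field(\lambda)(t)$, where $\field(N)_{\rho}=\field(\lambda)$ is transparent and Lemma~\ref{le:lorenz} can be proved by counting poles in $\bar{\field}\cup\{\infty\}$; here $\field(N)$ is a genuinely non-commutative division ring built from the arbitrary group $B$, so one must (i) identify $\field(N)_{\rho}$, or at least show it is small enough for the Bell--Rogalski hypothesis, and (ii) replace pole-counting by an argument internal to the skew-Laurent structure $\field(C)(x;\tau)$ and its $\mu$-filtration, controlling the $\rho$-orbits of leading coefficients in $\field(C)$. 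Once these are settled, Cohn's theorem, the symmetry computation, and Lichtman's criterion combine as in Theorem~\ref{teo:1} to produce the symmetric free pair $1+\gamma,\ 1+\delta$ in $\field(A)$, which completes the proof.
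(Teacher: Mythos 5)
Your reduction steps are fine: the identification of $A/B$ with the free class-$2$ group $G_0$, the tower $B\lhd C\lhd N\lhd A$, the presentations $\field(N)=\field(C)(x;\tau)$ and $\field(A)=\field(N)(y;\rho)$ via Lemma~\ref{le:hughesII} and Remark~\ref{rem:1}, the symmetry of $\gamma=y(1-y)^{-2}$ and $\delta=\gamma x(1-x)^{-2}\gamma$, their $y$-adic values, and the final appeal to Lichtman are all sound. But the heart of the argument --- that $(1-y)^{-1}$ and $x(1-x)^{-2}(1-y)^{-1}$ freely generate a free $\field$-algebra when the coefficient ring is the division ring $\field(N)$ --- is exactly the point you leave unproved. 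Theorem~\ref{teo:bell} as stated (and as used in the paper) concerns a commutative field extension $F/k$, and Lemma~\ref{le:lorenz} is proved by counting poles of rational functions on $\bar{k}\cup\{\infty\}$; neither transfers in any evident way to $F=\field(N)$, a Hughes-free division ring built over the completely arbitrary group $B$, about which nothing is assumed beyond the existence of a Hughes-free division ring of fractions for $\field[B]$. In particular the fixed division ring $\field(N)_{\rho}$ contains the fixed ring of conjugation by $y$ on $\field(B)$, which is not controllable, and there is no proposed substitute for the finite pole-set bookkeeping of Lorenz's argument. Since you yourself flag this verification as ``the main obstacle'' and only assert that it ``should follow'' from a noncommutative analogue, the proposal is not a proof: the missing step is not a routine adaptation but the entire difficulty of the statement.

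The paper avoids this difficulty altogether and never analyzes $\field(N)$. It writes $\field[A]=\field[B]\ast\Gamma$ with $\Gamma=A/B$, chooses a transversal compatible with ${}^{\star}$, and uses the augmentation $\varepsilon\colon\field[B]\to\field$ to build a ring homomorphism $\varphi\colon\field[B]\ast((\Gamma))\to\field((\Gamma))$. Theorem~\ref{teo:1} provides symmetric free generators $U,V$ of a free group algebra in $\field(\Gamma)$; setting $X=UV$ and writing $X=EF^{-1}$ with $E,F\in\field[\Gamma]$ (Ore condition), one lifts $E,F$ to $\hat{E},\hat{F}\in\field[A]$ with coefficients in $\field$, so that $\hat{F}$ and $\hat{F}^{\star}$ are invertible in $\field[B]\ast((\Gamma))$ and $\varphi(W)=X$, $\varphi(W^{\star})=X^{\star}$ for $W=\hat{E}\hat{F}^{-1}\in\field(A)$. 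Freeness of the pair $W,W^{\star}$ is then pulled back through $\varphi$, and the symmetric free generators are $WW^{\star}$ and $W^{\star}W$ --- not elements of the form $1+\gamma$, and no Bell--Rogalski-type analysis over a noncommutative coefficient division ring is required. If you wish to rescue your route, you would have to formulate and prove a Bell--Rogalski criterion with coefficients in $\field(N)$ and verify its hypothesis uniformly in $B$; as it stands, the core free-generation claim is an acknowledged gap.
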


\begin{proof}
  Denote $\Gamma=A/B$. Since $\Gamma$ is a torsion-free nilpotent
  group, it is orderable. Let $<$ denote an order in $\Gamma$. We shall
  fix a transversal of $B$ in $A$ in the following way, for
  each $\gamma\in \Gamma$, with $1<\gamma$, choose $x_{\gamma}\in A$
  such that $Bx_{\gamma}=\gamma$ and take $x_{\gamma}^{-1}$ to be
  the representative of $\gamma^{-1}$; set $x_{1}=1$. Now regard
  $\field[A]=\field[B]\ast\Gamma$. With this choice of a transversal,
  in $\field[B]\ast\Gamma$ we have $\bar{\gamma}^{\star} = \overline{\gamma^{-1}}$,
  for all $\gamma\in\Gamma$, where ${}^{\star}$ stands for the canonical
  involution on  $\field[A]$.
  
  By Lemma~\ref{le:hughesII}, $\field[A]$ has a Hughes-free division
  ring of fractions $\field(A)$, which coincides with the
  division ring of fractions of $\field(B)\ast\Gamma$ inside 
  $\field(B)\ast((\Gamma))$. Thus, we can consider the following commutative diagram
  $$\xymatrix{%
  \field[B]\ast\Gamma=\field[A]\ar@{^{(}->}[r]\ar@{^{(}->}[d] & \field(A)\ar@{^{(}->}[d]  \\
  \field[B]\ast((\Gamma))\ar@{^{(}->}[r] & \field(B)\ast((\Gamma))
  }$$

  By Theorem~\ref{teo:1}, there exist $U,V\in\field(\Gamma)$ such
  that $U^{\star}=U$, $V^{\star}=V$, and $U$ and $V$ freely generate
  a free group $\field$-subalgebra of rank $2$. Then, clearly,
  $X=UV$ and $X^{\star}=VU$ freely generate a free group $\field$-algebra
  in $\field(\Gamma)$ of rank $2$. Let
  $\varepsilon\colon\field[B]\to\field$ denote the augmentation map and
  consider the homomorphism
  $$
  \begin{array}{rcl}
  \varphi\colon\field[B]\ast((\Gamma)) & \longrightarrow & \field((\Gamma))\\
  \sum_{\gamma\in\Gamma}\bar{\gamma}h_{\gamma} & \longmapsto &
  \sum_{\gamma\in\Gamma} \gamma\varepsilon(h_{\gamma})
  \end{array}.
  $$
  We shall regard $\field(\Gamma)$ embedded into $\field((\Gamma))$ and shall
  show that with an appropriate choice of a preimage of $X$, we can
  pullback the free group $\field$-algebra from $\field(\Gamma)$ to
  $\field(A)$ through $\varphi$.

  Choose $E,F\in\field[\Gamma]$ such that $X=EF^{-1}$.
  If $E=\sum_{\gamma\in\Gamma}\gamma e_{\gamma}$ and $F=\sum_{\gamma\in\Gamma}
  \gamma f_{\gamma}$, with $e_{\gamma},f_{\gamma}\in\field$, let
  $\hat{E}=\sum_{\gamma\in\Gamma}\bar{\gamma}e_{\gamma}\in \field[B]\ast\Gamma=\field[A]$
  and $\hat{F}=\sum_{\gamma\in\Gamma}\bar{\gamma}f_{\gamma}\in \field[B]\ast\Gamma=\field[A]$.
  Then $\varphi(\hat{E})=E$, $\varphi(\hat{F})=F$, and
  $$
  \varphi(\hat{E}^{\star})=\varphi\left(\sum_{\gamma\in\Gamma}
  \bar{\gamma}^{\star}e_{\gamma}\right) = \sum_{\gamma\in\Gamma}
  \varphi\bigl(\overline{\gamma^{-1}}\bigr)e_{\gamma} =
  \sum_{\gamma\in\Gamma} \gamma^{-1}e_{\gamma}=E^{\star}.
  $$
  Similarly, $\varphi(\hat{F}^{\star})=F^{\star}$.

  Now, since the coefficients of $\hat{F}$ lie $\field$, both
  $\hat{F}^{-1}$ and $(\hat{F}^{\star})^{-1}$ lie in $\field[B]\ast((\Gamma))$,
  and we have $\varphi(\hat{E}\hat{F}^{-1})=
  EF^{-1}=X$ and $\varphi\bigl((\hat{F}^{\star})^{-1}\hat{E}^{\star}\bigr) =
  (F^{\star})^{-1}E^{\star}=(EF^{-1})^{\star}=X^{\star}$. Letting $W=\hat{E}\hat{F}^{-1}$,
  we get $W\in\field(A)$ with the property that $W$ and $W^{\star}$ freely generate a
  free group $\field$-subalgebra of $\field(A)$. Thus, with respect to the canonical
  involution,  $WW^{\star}$ and $W^{\star}W$ are symmetric elements
  of $\field(A)$,  which freely generate a free group
  $\field$-subalgebra.
\end{proof}

We are ready to tackle our main objective. But first we recall a result of 
Longobardi, Maj and Rhemtulla, which will be used in the proof.

\begin{lemma}[{\cite[Corollary~3]{LMR95}}]\label{le:solvable}
  Let $G$ be a finitely generated group with no free submonoid. Then
  for every positive integer $n$, the nth derived subgroup $G^{(n)}$
  of $G$ is finitely generated. In particular if $G$ is solvable, 
  then it is polycyclic. \qed
\end{lemma}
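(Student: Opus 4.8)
This is a theorem of Longobardi, Maj and Rhemtulla, so I only sketch how one would argue. The second assertion is immediate from the first: if $G$ is solvable of derived length $d$, then in the series of characteristic subgroups $G=G^{(0)}\supseteq G^{(1)}\supseteq\dots\supseteq G^{(d)}=\{1\}$ every term is finitely generated by the first assertion, hence each abelian factor $G^{(i)}/G^{(i+1)}$ is a finitely generated abelian group and so admits a finite subnormal refinement with cyclic factors; splicing these refinements exhibits $G$ as polycyclic.

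For the first assertion, the plan is to prove by induction on $n$, starting from $G^{(0)}=G$, that $G^{(n)}$ finitely generated implies $G^{(n+1)}$ finitely generated; since $G^{(n)}\leq G$ inherits the property of having no free submonoid, this reduces to the claim that \emph{a finitely generated group $H$ with no free submonoid has finitely generated derived subgroup}. I would deduce this from the more malleable statement: if $H$ is finitely generated with no free submonoid and $N\trianglelefteq H$ has $H/N$ finitely generated abelian, then $N$ is finitely generated (then put $N=H'$). The malleable statement would be proved by induction on the torsion-free rank $r$ of $H/N$. If $r=0$, then $N$ has finite index in $H$ and so is finitely generated. If $r\geq1$, choose a surjection $H\twoheadrightarrow\Z$ vanishing on $N$ (possible as $H/N$ has positive rank), let $M$ be its kernel, and note that $N\leq M\trianglelefteq H$ with $M/N$ finitely generated abelian of rank $r-1$; assuming $M$ is finitely generated, the inductive hypothesis applied to $N\trianglelefteq M$ then gives that $N$ is finitely generated.

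So everything reduces to the case $H/N\cong\Z$: one must show that the kernel $M$ of a surjection $H\twoheadrightarrow\Z$, with $H$ finitely generated and having no free submonoid, is finitely generated. Writing $H=\langle t,y_1,\dots,y_m\rangle$ with $t$ mapping onto a generator of $\Z$ and each $y_i\in M$, one has $M=\bigcup_{l\geq0}M_l$ where $M_l=\langle t^{-j}y_i t^j : |j|\leq l\rangle$ is an ascending chain of finitely generated subgroups, and $M$ is finitely generated exactly when this chain stabilizes. The main obstacle, and the genuinely hard part, is to show that a strictly increasing chain $M_0\subsetneq M_1\subsetneq M_2\subsetneq\cdots$ forces the existence of two elements of $H$ generating a free submonoid of rank $2$, contradicting the hypothesis on $H$; this is precisely the delicate combinatorial argument (``ping-pong'' on positive words) carried out in \cite{LMR95}, in the same spirit as Rosenblatt's result that finitely generated solvable groups of exponential growth contain free submonoids. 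Granting it, the chain stabilizes, $M$ is finitely generated, and the reductions above finish the proof.
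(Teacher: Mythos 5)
Your proposal is fine as far as it goes, but note what the paper actually does here: Lemma~\ref{le:solvable} is imported verbatim from \cite{LMR95} (Corollary~3 there) and carries a \qed{} with no argument at all, so there is no ``paper proof'' to match — the comparison is with the literature. Your surrounding reductions are all correct and standard: the deduction of polycyclicity from the first assertion, the induction on $n$ using that subgroups inherit the no-free-submonoid property, the induction on the torsion-free rank of $H/N$ (with the rank-$0$ case by finite index and the inductive step via a surjection onto $\Z$), and the description of the kernel $M$ as the ascending union of the subgroups $M_l$ generated by the conjugates $t^{-j}y_it^j$ with $\abs{j}\leq l$, together with the equivalence of finite generation with stabilization of that chain. (One small point worth making explicit: if the joint chain fails to stabilize, then already for a single generator $y_i$ the chain $\bigl\langle t^{-j}y_it^j : \abs{j}\leq l\bigr\rangle$ fails to stabilize, since $M$ is generated by the union of these per-generator subgroups; this is what lets you apply the two-element lemma to the pair $(y_i,t)$.) The one genuinely hard step — that a non-stabilizing chain of conjugate-generated subgroups forces a free subsemigroup of rank $2$ — you cite rather than prove; that is exactly the key lemma of \cite{LMR95} (in the spirit of Rosenblatt \cite{jR74}), so your write-up is not a self-contained proof, but it is an accurate account of how the result is obtained, and it supplies the bookkeeping that both the paper and the bare citation leave implicit.
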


\medskip

\begin{theorem}~\label{teo:freegen}
  Let $\delta$ be an ordinal, let $G$ be a $\delta$-poly-orderable group,
  let $\field$ be a field and let $\field(G)$ be the Hughes-free
  division ring of fractions of the group algebra $\field[G]$.
  Then the following are equivalent:
  \begin{enumerate}[(1)]
  \item $\field(G)$ contains a free group $\field$-algebra of rank $2$ freely 
    generated by symmetric elements with respect to the canonical involution.\label{c:1}
  \item $\field(G)$ is not a locally P.I. $\field$-algebra.\label{c:2}
  \item $G$ is not locally abelian-by-finite.\label{c:3}
  \end{enumerate}
\end{theorem}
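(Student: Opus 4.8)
The plan is to establish the cycle of implications $(1)\Rightarrow(2)\Rightarrow(3)\Rightarrow(1)$. The implications $(1)\Rightarrow(2)$ and $(2)\Rightarrow(3)$ are the easy directions: if $\field(G)$ contains a free group $\field$-algebra of rank $2$, then it is certainly not locally P.I., since a nonabelian free algebra satisfies no polynomial identity; and if $\field(G)$ is locally P.I. then in particular $\field[G]$ is a P.I. algebra, whence by a standard result (Isaacs--Passman) $G$ has an abelian subgroup of finite index — applied to finitely generated subgroups this gives that $G$ is locally abelian-by-finite. (Conversely, if $G$ is locally abelian-by-finite then every finitely generated subgroup $H$ is abelian-by-finite, so $\field[H]$ is P.I., and $\field(H)$ is a finite-dimensional algebra over the field of fractions of the center of $\field[H_0]$ for an abelian normal subgroup $H_0$ of finite index, hence P.I.; so $(3)$ fails $\Rightarrow$ $(2)$ fails, which is $(2)\Rightarrow(3)$ combined with its converse — I will include whichever direction is needed.)

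The substantive implication is $(3)\Rightarrow(1)$: assuming $G$ is not locally abelian-by-finite, I must produce a symmetric free pair in $\field(G)$. First reduce to a finitely generated subgroup $H\le G$ that is not abelian-by-finite; then it suffices to find the free group $\field$-algebra inside $\field(H)\subseteq\field(G)$ (here one uses that $H$ is again $\delta$-poly-orderable, hence Hughes-free embeddable, and that $\field(H)$ embeds canonically in $\field(G)$ with the canonical involution restricting correctly). Now split on whether $H$ contains a free submonoid of rank $2$. If it does, Proposition~\ref{prop:freemon} immediately gives the desired symmetric free pair. If it does not, then by Lemma~\ref{le:solvable} every derived subgroup $H^{(n)}$ is finitely generated; since $H$ is poly-orderable hence locally indicable hence torsion-free, and a finitely generated poly-orderable group without free submonoids must be solvable (this is where the $\delta$-poly-orderable hypothesis and the no-free-submonoid hypothesis combine — a $\delta$-poly-orderable group with no free submonoid has each orderable quotient $G_{\gamma+1}/G_\gamma$ without free submonoid, and one argues by transfinite induction that the series can be refined to a finite series, i.e. $H$ is polycyclic), $H$ is in fact polycyclic-by-finite, in fact polycyclic since torsion-free.

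For such a polycyclic $H$ that is not abelian-by-finite, the strategy is to locate inside $H$ a two-generator subgroup $A$ with a normal subgroup $B$ such that $A/B$ is torsion-free nilpotent of class $2$, and $\field[B]$ has a Hughes-free division ring of fractions (automatic, as $B$ is again polycyclic hence poly-orderable hence Hughes-free embeddable); then Proposition~\ref{prop:nilpotent} delivers a symmetric free pair in $\field(A)\subseteq\field(H)\subseteq\field(G)$. The existence of such $A$ and $B$ is a purely group-theoretic statement: a polycyclic group that is not abelian-by-finite has a finitely generated — indeed two-generated, after passing to a suitable subgroup — section which is torsion-free nilpotent of class exactly $2$; this follows by examining the upper central series / a refinement of a polycyclic series, using that "abelian-by-finite" for polycyclic groups is equivalent to "virtually abelian" and that failure of this forces a genuinely class-$2$ nilpotent quotient of a two-generator subgroup (a Hirsch-length/derived-series argument, essentially the observation that in a non-virtually-abelian polycyclic group one can find $a,b$ with $[a,b]$ of infinite order centralizing $\langle a,b\rangle$ modulo the subgroup they generate with it). This group-theoretic extraction of the class-$2$ section is the main obstacle; everything else is bookkeeping with the already-established machinery of Sections~\ref{sec:gen-ext}--\ref{sec:gen-free}. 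One also handles the remaining case, where $H$ is polycyclic, not abelian-by-finite, but every two-generator subgroup is abelian-by-finite — showing this case is vacuous, i.e. a non-virtually-abelian polycyclic group always has a non-virtually-abelian (equivalently, after the above, a class-$2$ nilpotent sectioned) two-generator subgroup — which is the same group-theoretic input.
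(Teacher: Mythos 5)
Your handling of the easy implications is essentially the paper's (modulo the routine but necessary observation, proved at the start of the paper's argument, that every finitely generated $\field$-subalgebra of $\field(G)$ lies in $\field(H)$ for some finitely generated subgroup $H$). The genuine gap is in $(3)\Rightarrow(1)$, in the case where your finitely generated non-abelian-by-finite subgroup $H$ contains no free submonoid. There you assert that such an $H$ must be solvable, indeed polycyclic, on the grounds that each factor of the induced series is orderable without free submonoid and that ``one argues by transfinite induction that the series can be refined to a finite series.'' This is not a proof, and the underlying claim is not available: an orderable group with no free subsemigroup need not be abelian, nilpotent, or even solvable by anything you invoke, your induction does not make the factors finitely generated, and nothing forces the length of the series down to a finite one. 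The paper deliberately routes around this point via the trichotomy of \cite{jSpp}: a nonabelian ordered factor $G_{\gamma+1}/G_{\gamma}$ either lies in one of two classes that force a free submonoid (which lifts to $G$, contradicting the case hypothesis), or lies in the remaining class, in which case it contains a two-generated subgroup with a nonabelian torsion-free nilpotent class-$2$ quotient; pulling this back to $G_{\gamma+1}$ and applying Proposition~\ref{prop:nilpotent} disposes of that case with no solvability claim at all. Only after all nonabelian factors are eliminated, so that every factor of the series is abelian, does the paper prove that $H$ is solvable, and its ordinal-descent argument (with Lemma~\ref{le:solvable} keeping each $H^{(n)}$ finitely generated) hinges on $[G_{\varepsilon+1},G_{\varepsilon+1}]\subseteq G_{\varepsilon}$, i.e.\ precisely on the abelianness of the factors, which your sketch does not have.

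A secondary problem: your group-theoretic extraction step --- that a polycyclic group which is not abelian-by-finite has a two-generated subgroup admitting a torsion-free nilpotent class-$2$ section, found via elements $a,b$ with $[a,b]$ of infinite order and central modulo a suitable subgroup --- is false as stated: $\Z^2\rtimes_A\Z$ with $A$ a hyperbolic matrix in $\GL_2(\Z)$ is polycyclic and not abelian-by-finite, yet has no nonabelian torsion-free nilpotent sections whatsoever. What rescues this step in your situation is the no-free-submonoid hypothesis combined with \cite{jR74} or \cite{LMR95}: a finitely generated solvable group without free subsemigroups is nilpotent-by-finite, and a torsion-free nilpotent-by-finite group that is not abelian-by-finite contains a torsion-free nilpotent subgroup of class $2$, to which Theorem~\ref{teo:1} applies directly --- which is exactly how the paper concludes. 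So even granting polycyclicity you would need to argue through Rosenblatt/Longobardi--Maj--Rhemtulla rather than a Hirsch-length argument; but the unproved solvability/polycyclicity claim, which silently bypasses the trichotomy of \cite{jSpp} and Proposition~\ref{prop:nilpotent} in the case of a nonabelian orderable factor without free submonoids, is the essential missing ingredient.
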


\begin{proof}
  We claim that for each $x\in \field(G)$, there exists a finitely
  generated subgroup $H_x$ of $G$ such that $x\in \field(H_x)$.
  Indeed, first observe that 
  $$
  \field(G)=\bigcup_{n\geq 0} Q_n,
  $$ 
  where $Q_0=\field[G]$ and, for each $n\geq 0$, $Q_{n+1}$ is the subring 
  of $\field(G)$ generated by $Q_n$ and the inverses of its nonzero elements. 
  If $x\in Q_0$, the result is clear. Suppose by induction that the claim holds 
  for $n\geq 0$. If $x\in Q_{n+1}$, then $x=\sum x_{i1}^{\varepsilon_{i1}}
  \dotsm x_{in_i}^{\varepsilon_{in_i}}$, where $x_{ij}\in Q_n$ and
  $\varepsilon_{ij}\in \{1,-1\}$. For each $i$ and each $j$, let $H_{x_{ij}}$ be the 
  finitely generated subgroup given by the induction hypothesis. If we set 
  $H_x$ as the subgroup generated by all $H_{x_{ij}}$, then $x\in \field(H_x)$.

  From this claim it follows that for each finitely generated $\field$-subalgebra 
  $R$ of $\field(G)$ there exists a finitely generated subgroup $H$ of $G$ such 
  that $R\subseteq \field(H)$. It is well known that if $H$ is an
  abelian-by-finite group, then $\field(H)$ is of finite dimension
  over its center (see, e.g., \cite{aL78}), and thus a P.I.~algebra. So condition \ref{c:2} implies 
  condition \ref{c:3}.

  Condition \ref{c:1} implies condition \ref{c:2} because a P.I.~algebra does not contain 
  a noncommutative free subalgebra.
  
  Now, suppose that condition \ref{c:3} holds.
  If $G$ contains a free monoid, the result follows from Proposition~\ref{prop:freemon}.

  An ordered group belongs to one and only one
  of the three classes defined in \cite{jSpp}. Ordered groups belonging
  to two of these classes contain a free monoid of rank $2$. Thus,
  if, for some $\gamma<\delta$, $G_{\gamma+1}/G_\gamma$ belongs to one of
  these, then $G_{\gamma+1}/G_\gamma$ and, \textit{a fortiori},
  $G$ contain a free monoid.  On the other hand, if a nonabelian factor 
  $G_{\gamma+1}/G_\gamma$, for some $\gamma<\delta$, lies in
  the remaining class of ordered groups, then $G_{\gamma+1}/G_\gamma$ contains a
  subgroup $\mathcal{A}$ generated by two elements $\alpha,\beta$ with a normal subgroup
  $\mathcal{B}$ such that $\mathcal{A}/\mathcal{B}$ is a nonabelian torsion-free nilpotent
  group of class $2$. Pick $x,y\in G_{\gamma+1}$ such that $xG_\gamma=\alpha$ and $yG_\gamma=\beta$,
  and let $A$ be the subgroup of $G_\gamma$ generated by $x$ and $y$. Then $A$
  is a subgroup of $G_\gamma$ containing a normal subgroup $B$ such
  that $A/B$ is a nonabelian torsion-free nilpotent group of class $2$. By
  Proposition~\ref{prop:nilpotent},
  $\field(A)\subseteq \field(G)$ contains a free group $\field$-algebra of rank $2$ 
  freely generated by symmetric elements with respect to the canonical involution.

  Thus we can suppose that $G$ does not contain a free monoid and that $G_{\gamma+1}/G_{\gamma}$ 
  is abelian for all $\gamma<\delta$. Let $H$ be a finitely generated subgroup of $G$ which 
  is not abelian-by-finite. We claim that $H$ is solvable. Indeed, by Lemma~\ref{le:solvable}, 
  we know that for each positive integer $n$, the derived subgroup $H^{(n)}$ is finitely generated. Let
  $\gamma_n$ be the first ordinal such that $H^{(n)}\subseteq
  G_{\gamma_n}$. Note that $\gamma_n$ is not a limit ordinal.
  But, for each nonlimit ordinal $\gamma=\varepsilon +1$,
  since $G_{\varepsilon +1}/G_\varepsilon$ is abelian, we have that
  $[G_{\varepsilon+1}, G_{\varepsilon +1}]\subseteq
  G_{\varepsilon}$. Thus, if $H$ is not a solvable group, using the fact
  that $H^{(n)}$ is finitely generated and locally indicable, we get that
  $\{\gamma_n\}$ is a  strictly decreasing sequence of ordinals, which is impossible.
  Hence $H$ is solvable. Moreover, since $H$ does not contain a free
  monoid, \cite[Theorems~4.7 and 4.12]{jR74}
  or \cite[Theorem~1]{LMR95} imply that $H$ is nilpotent-by-finite,
  but not abelian-by-finite by hypothesis. Hence, being $H$ torsion-free, it
  contains a torsion-free nilpotent subgroup $L$ of class $2$.
  Therefore, if follows from Theorem~\ref{teo:1}, that $\field(L)\subseteq \field(G)$ contains a 
  free group $\field$-algebra of rank $2$ freely generated by symmetric elements
  with respect to the canonical involution.
\end{proof}

As a first remark on Theorem~\ref{teo:freegen}, note that it
generalizes \cite[Theorem~3.1]{jSpp}, but the proof of the latter
is more elementary since no use of \cite{iH70}, \cite{iH72} and
\cite{jR74} is made

Secondly, if $G$ is an orderable group, then condition \ref{c:3} in 
Theorem~\ref{teo:freegen} is equivalent to $G$ being not abelian, because any 
abelian-by-periodic orderable group is abelian. Indeed, suppose that $H$ is an abelian
normal subgroup of $G$ and that $G/H$ is periodic. Let $z\in H$ and let $g\in G$. There 
exists $n\geq 1$ such that $g^n\in H$. Thus $zg^n=g^nz$ and  $(zgz^{-1})^n=g^n$. 
Since roots are unique in an orderable group, we get that $zg=gz$. Because $z\in H$ and 
$g\in G$ are arbitrary, we get that $H$ is contained in the center $Z$ of $G$. 
It is well known that in an orderable group the quotient of the group by its center 
is orderable. Thus, $G/Z\cong (G/H)/(Z/H)$ is both orderable and periodic. Therefore $G=Z$, 
as desired.

\begin{corollary}
  Let $\field$ be a field and let $G$ be a torsion-free polycylic group. Then the 
  Ore division ring of fractions of the group algebra $\field[G]$ contains a free 
  group $\field$-algebra of rank $2$ freely generated by symmetric elements with 
  respect to the canonical involution if and only if $G$ is not abelian-by-finite.
\end{corollary}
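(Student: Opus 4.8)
The plan is to derive this corollary directly from Theorem~\ref{teo:freegen} by checking that a torsion-free polycyclic group $G$ falls within the scope of that theorem and that its hypotheses translate correctly. First I would observe that a polycyclic group has a subnormal (in fact normal) series with cyclic factors; since $G$ is torsion-free, we may refine and rearrange so that the infinite cyclic factors remain, giving $G$ the structure of a $\delta$-poly-orderable group for a finite ordinal $\delta$ (infinite cyclic groups are orderable). In particular $G$ is locally indicable and Hughes-free embeddable, so $\field[G]$ has a Hughes-free division ring of fractions $\field(G)$; moreover, since polycyclic groups give rise to noetherian group algebras that are Ore domains, this $\field(G)$ coincides with the Ore division ring of fractions of $\field[G]$, so the two formulations agree.

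Next I would invoke Theorem~\ref{teo:freegen}: condition \ref{c:1} there (existence of a free group $\field$-algebra of rank $2$ freely generated by symmetric elements) is equivalent to condition \ref{c:3} (that $G$ is not locally abelian-by-finite). So it remains only to check that for a torsion-free polycyclic group, ``not locally abelian-by-finite'' is the same as ``not abelian-by-finite.'' One direction is trivial: if $G$ is abelian-by-finite then every subgroup is abelian-by-finite, so $G$ is locally abelian-by-finite. For the converse, I would use that $G$ is finitely generated, so $G$ is one of its own finitely generated subgroups; hence if $G$ is locally abelian-by-finite, then $G$ itself is abelian-by-finite. Thus condition \ref{c:3} holds precisely when $G$ is not abelian-by-finite, and the corollary follows.

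I do not expect a serious obstacle here; the only point requiring a little care is confirming that a torsion-free polycyclic group really is $\delta$-poly-orderable in the sense of the paper, i.e.\ that one can arrange a subnormal series all of whose factors are \emph{orderable} (here, infinite cyclic). This is standard: a torsion-free polycyclic group has a poly-(infinite cyclic) series, and infinite cyclic groups are orderable, so it is a $\delta$-poly-orderable group with $\delta$ finite. With that in hand, all the machinery of Theorem~\ref{teo:freegen} applies verbatim, and the identification of $\field(G)$ with the Ore field of fractions is immediate from the noetherianity of $\field[G]$ for polycyclic $G$.
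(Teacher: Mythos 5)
There is a genuine gap at the very first step: your reduction rests on the claim that a torsion-free polycyclic group is $\delta$-poly-orderable (via a poly-(infinite cyclic) series), and you call this ``standard,'' but it is false. A poly-orderable group must surject onto a nontrivial orderable (hence, being finitely generated, indicable) group, so it must have infinite abelianization; yet there exist torsion-free polycyclic groups with \emph{finite} abelianization, e.g.\ the Hantzsche--Wendt group $\langle x,y : xy^2x^{-1}=y^{-2},\ yx^2y^{-1}=x^{-2}\rangle$, a torsion-free Bieberbach group whose abelianization is $\Z/4\times\Z/4$. Such a group is torsion-free polycyclic but admits no subnormal series with orderable (in particular infinite cyclic) factors, so Theorem~\ref{teo:freegen} cannot be applied to $G$ itself, and your entire argument --- both directions of the equivalence as you derive them --- hangs on that inapplicable application.

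The paper's proof avoids exactly this pitfall: by Hirsch's theorem, $G$ is poly-\{infinite cyclic\}-\emph{by-finite}, so one chooses a \emph{normal poly-(infinite cyclic) subgroup $H$ of finite index}, which is poly-orderable, and applies Theorem~\ref{teo:freegen} to $H$ inside $D$ (noting that if $G$ is not abelian-by-finite then neither is the finite-index subgroup $H$, and that the canonical involution on $D$ restricts to the canonical one on $\field(H)$). The converse direction (abelian-by-finite implies no free symmetric pair) is then handled not by citing the equivalence for $G$, but by repeating the argument that condition \ref{c:1} implies condition \ref{c:3} in Theorem~\ref{teo:freegen}, which does not use poly-orderability of $G$ at all: every finitely generated subalgebra of $D$ lies in some $\field(H')$ with $H'\leq G$ finitely generated, and abelian-by-finite $H'$ forces $\field(H')$ to be a P.I.\ algebra. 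Your observation that local abelian-by-finiteness equals abelian-by-finiteness for the finitely generated group $G$, and that the Ore and Hughes-free fields of fractions coincide here, are fine, but to repair the proof you must route the main implication through a finite-index poly-$\Z$ (or otherwise poly-orderable) subgroup rather than through $G$.
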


\begin{proof}
  Let $D$ denote the Ore division ring of fractions of $\field[G]$.
  It is well known that $G$ is poly-\{infinite cyclic\}-by-finite. Let $H$ be a normal 
  poly-\{infinite cyclic\} (and, thus, poly-orderable) subgroup of $G$ of finite index. 
  By Theorem~\ref{teo:freegen}, $\field(H)\subseteq D$ contains a free group $\field$-algebra 
  of rank $2$ freely generated by symmetric elements with respect to the canonical involution 
  if and only if $H$ is not abelian-by-finite. If $G$ is not abelian-by-finite, 
  then $H$ is not abelian-by-finite, and the result follows. The proof of the reverse implication 
  is the same as the proof that condition \ref{c:1} implies condition \ref{c:3} in
  Theorem~\ref{teo:freegen} (in which the hypothesis on $G$ is not needed).
\end{proof}

\begin{remark}
  In view of Propositions~\ref{prop:freemon} and \ref{prop:nilpotent},
  we think it might be possible to prove that a Hughes-free division
  ring of fractions of the group algebra of a locally indicable
  group which is not locally abelian-by-finite
  will always contain a free group algebra freely generated by symmetric elements with
  respect to the canonical involution. The evidence we have for this is as follows.

  Let $G$ be a locally indicable group, let $\field$ be a field and suppose that
  the group ring $\field[G]$ has a Hughes-free division ring of fractions $D$. Thus 
  the canonical involution can be uniquely extended to $D$.

  We have proved that if there exists a finitely generated subgroup $H$ of $G$ such 
  that either
  \begin{enumerate}[(i)]
   \item $H$ contains a free monoid of rank $2$, or
   \item there exists a normal subgroup $N$ of $H$ such that $H/N$ is torsion-free 
     nilpotent of class $2$,
  \end{enumerate}
  then $D$ contains a free group $k$-algebra of rank $2$ generated by symmetric 
  elements. Incidentally, if $G$ is a $\delta$-poly-orderable group which is not 
  locally abelian-by-finite, then it contains such a finitely generated subgroup $H$. 
  Of course, if $G$ is abelian-by-finite, then $D$ cannot contain a free algebra of rank 2.

  Our methods do not apply in the remaining case: $G$ is not locally abelian-by-finite 
  and it does not contain a finitely generated subgroup $H$ satisfying (i) or (ii). 
  In this case, $G$ does not contain a free monoid and each finitely generated subgroup $H$ is 
  either abelian-by-finite or not solvable. (Indeed, if $H$ were solvable, then 
  \cite[Theorems~4.7 and 4.12]{jR74} or \cite[Theorem~1]{LMR95} would imply that $H$ was 
  nilpotent-by-finite. Being $H$ torsion free, $H$ would either be abelian-by-finite 
  or contain a torsion-free nilpotent subgroup of class 2 and Theorem~\ref{teo:1}
  would apply.)
  If $H$ is neither abelian-by-finite nor solvable, then
  the $n$-th derived subgroup $H^{(n)}$ of $H$ is finitely generated and $H/H^{(n)}$ 
  is polycyclic  for every positive integer
  $n$ by Lemma~\ref{le:solvable}. Moreover, since $H^{(n)}\neq \{1\}$ and $H$ is locally 
  indicable, $H^{(n+1)}\subseteq H^{(n)}$ and $H^{(n)}/H^{(n+1)}$ is a finitely generated
  abelian group which contains torsion-free elements for every positive integer $n$.
  Also $H/H^{(n)}$ is polycyclic with no noncommutative free monoid, and thus
  nilpotent-by-finite.
  Let $N$ be a normal subgroup of $H$ such that $H^{(n)}\subseteq N$, $N/H^{(n)}$ is nilpotent 
  and $H/N$ is finite. Since the elements of finite order in a nilpotent group form a 
  characteristic subgroup, there exists a subgroup $H^{(n)}\subseteq N^+\subseteq N$ such 
  that $N^+/H^{(n)}$ is finite, $N^+$ is normal in $H$ and $N/N^+$ torsion-free nilpotent. 
  Thus $N/N^+$ has to be abelian because $N$ is finitely generated and (ii) is not satisfied.

  We do not have examples of such a group $G$.
\end{remark}

\section{Further developments}\label{sec:prob}

In this last section, we shall point to possible generalizations
of the results obtained in this paper. We present two possible
directions towards which new research can be done.

\subsection{General involutions}

We have seen in Theorem~\ref{teo:freegen} that if $\field$ is a field and $G$ is a nonabelian ordered
group, then the division ring $\field(G)$ generated by $\field[G]$ inside
the Malcev-Neumann series ring $\field((G))$ has an involution induced by
the canonical involution in $G$. Moreover, with respect to this involution,
$\field(G)$ contains a free group $\field$-algebra of rank $2$ freely generated
by symmetric elements. A natural question that arises is whether the same
can be proved for other kinds of involutions. We shall address this question for involutions 
in $\field(G)$ which, although not being the canonical one, still are induced by
involutions on $G$. 

We restrict to the nilpotent case and, in fact, raise more
questions than answers.

By and \emph{involution} on a group $G$ we understand a map 
${}^{\star}\colon G\to G$ satisfying
\begin{enumerate}[(i)]
\item $(xy)^{\star}=y^{\star}x^{\star}$, for all $x,y\in G$, and
\item $(x^{\star})^{\star}=x$, for all $x\in G$.
\end{enumerate}

If $\field$ is a field and ${}^{\star}$ is an involution on a group
$G$, then the group algebra $\field[G]$ can be endowed with a 
$\field$-involution, still denoted by ${}^\star$, defined by
$\left(\sum_{x\in G}xa_x\right)^{\star}= \sum_{x\in G}x^{\star}a_x$,
with $x\in G$ and $a_x\in\field$, all but a finite number of which nonzero.

For instance, if $G = \langle x,y : [[x,y],x]=[[x,y],y]=1\rangle$ is the free 
nilpotent group of class $2$ generated by two elements, considered in Section~\ref{sec:nilp},
then $G$ has an involution ${}^{\star}$ satisfying
$x^{\star}=x^{-1}$ and $y^{\star}=y$. The map $x^{\dagger}=x$, $y^{\dagger}=y$
also induces an involution on $G$ (satisfying $[x,y]^{\dagger}=[x,y]^{-1}$).
A third example of a non canonical involution ${}^{\ddagger}$ on $G$ is given by
$x^{\ddagger}=y$ and $y^{\ddagger}=x$. 
These involutions extend to $\field$-involutions on $\field[G]$ and,
therefore, to $\field$-involutions on the Ore field of fractions $D$
of $\field[G]$.

Clearly, the elements 
$$
  1+y(1-y)^{-2}
  \quad\text{and}\quad
  1+ y(1-y)^{-2}x(1-x)^{-2}y(1-y)^{-2},
$$
found in Theorem~\ref{teo:1}, which freely generate a free group $\field$-algebra in
$D$, are symmetric with respect to both ${}^{\star}$ and ${}^{\dagger}$.
The involution ${}^{\ddagger}$ on $G$ seems more mysterious and our
methods do not provide an answer. We, thus, suggest the following
problem.

\begin{problem}
  Let $\field$ be a field and consider the group 
  $G=\langle x,y : [[y,x],x]=[[y,x],y]=1\rangle$. Let $D$ be the 
  field of fractions of the group algebra $\field[G]$ and let ${}^{\ddagger}$
  denote the $\field$-involution on $D$ induced by the involution on $G$
  such that $x^{\ddagger}=y$ and $y^{\ddagger}=x$. Find a pair of symmetric
  elements in $D$ that freely generate a free group $\field$-algebra.
\end{problem}

Rather, more generally, one could ask whether for every $\field$-involution
on $D$ which is induced by an involution on $G$, there will always exist
a pair of symmetric elements freely generating a free group $\field$-algebra 
inside $D$.

\subsection{Twisted involutions}

Given an involution ${}^{\star}$ on a group $G$ and a group
homomorphism $c\colon G\to\field^{\times}$ into the multiplicative group
of a field $\field$, the map
$$
\begin{array}{rcl}
  \field[G] & \longrightarrow & \field[G]\\
  \sum_{x\in G}xa_x & \longmapsto & \sum_{x\in G}x^{\star}c(x)a_x,
\end{array}
$$
where, for all $x\in G$, $a_x$ are elements of $\field$ all but a
finite number of which nonzero, is a $\field$-involution on
$\field[G]$. These kind of involutions in $\field[G]$ will be called \emph{twisted
involutions}.

One might ask whether the results in this paper hold for general
twisted involutions or, particularly, for twisted involutions
induced by the canonical involution on $G$.

We have not explored twisted involutions throughly, but some of
the results on the paper do have a version for them. For instance,
the following version of Proposition~\ref{prop:freemon} can be
proved in the same way.

\begin{proposition}
  Let $\field$ be a field, let $G$ be a locally indicable group
  and let $c\colon G\to \field^{\times}$ be a group homomorphism.
  Suppose that the group algebra $\field[G]$ has a Hughes-free division ring
  of fractions $D$. If $G$ contains a free monoid of rank $2$, then $D$
  contains a free group $\field$-algebra of rank $2$ freely generated by
  symmetric elements with respect to the $c$-twisted canonical involution on $D$.
\end{proposition}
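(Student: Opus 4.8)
The plan is to follow the proof of Proposition~\ref{prop:freemon} line by line, the only substantive change being the choice of the two symmetric elements whose inverses generate the free group algebra. Write $\star_c$ for the $c$-twisted canonical involution; on a group element $x\in G$ it acts by $x^{\star_c}=c(x)x^{-1}$, and since $c$ takes values in the (central) field $\field$, the map $\star_c$ is a $\field$-involution of $\field[G]$ and hence extends to a $\field$-involution of $D$ by Theorem~\ref{teo:extension}.

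First I would reproduce the reduction carried out in the first half of the proof of Proposition~\ref{prop:freemon}: choose $x,y\in G$ generating a free submonoid, put $H=\langle x,y\rangle$, use local indicability to pick a normal subgroup $N$ of $H$ with $H/N$ infinite cyclic and $t\in H$ with $H/N=\langle tN\rangle$, realize $D(H)=D(N)(t;\sigma)$ (with $\sigma$ as in Remark~\ref{rem:1}) together with its $t$-adic valuation $\nu$, and arrange, exactly as there, that $x,y$ still freely generate a free monoid and that $\nu(x)>0$ and $\nu(y)>0$. Now, in place of $f=x+x^{-1}$ and $g=y+y^{-1}$, I would take
\[
f=x+c(x)x^{-1}=\bigl(x^{2}+c(x)\bigr)x^{-1},\qquad g=y+c(y)y^{-1}=\bigl(y^{2}+c(y)\bigr)y^{-1}.
\]
A one-line computation using $c(x^{-1})=c(x)^{-1}$ shows $f^{\star_c}=f$ and $g^{\star_c}=g$; since $G$ is torsion-free, $x^{2}+c(x)$ and $y^{2}+c(y)$ are nonzero in $\field[G]$, so $f,g\in D(H)^{\times}$ and $f^{-1},g^{-1}$ are again $\star_c$-symmetric. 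As $c(x),c(y)\in\field^{\times}$ have $\nu$-value $0$, we get $\nu\bigl(x^{2}+c(x)\bigr)=0$, hence $f^{-1}=x\bigl(x^{2}+c(x)\bigr)^{-1}=c(x)^{-1}x\bigl(1+c(x)^{-1}x^{2}\bigr)^{-1}$, so that inside $D(N)((t;\sigma))$ the series of $f^{-1}$ has leading term $c(x)^{-1}$ times that of $x$; in particular $\nu(f^{-1})=\nu(x)>0$, and likewise $\nu(g^{-1})=\nu(y)>0$.

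With these facts in hand, the two remaining steps of the proof of Proposition~\ref{prop:freemon} go through unchanged: when one compares leading terms of a putative free-monoid relation among $f^{-1}$ and $g^{-1}$, or of a putative $\field$-linear dependence among monomials in $f^{-1}$ and $g^{-1}$, the only new feature is that the leading term of a monomial $w(f^{-1},g^{-1})$ now carries a central scalar factor $c(x)^{-a}c(y)^{-b}$, where $a,b$ are the degrees of $w$ in its two variables. These nonzero scalars do not affect the conclusion that the induced relation among the group elements $w(x,y)$ must be trivial, because $x,y$ generate a free monoid and the elements of $G$ are $\field$-linearly independent. Thus $f^{-1}$ and $g^{-1}$ freely generate a free $\field$-subalgebra of $D(H)$, and since $\nu(f^{-1}),\nu(g^{-1})>0$, \cite[Corollary~1 on p.~524]{aL84} shows that the $\star_c$-symmetric elements $1+f^{-1}$ and $1+g^{-1}$ freely generate a free group $\field$-algebra in $D(H)\subseteq D$.

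I do not expect any genuine obstacle beyond this scalar bookkeeping; the point worth isolating is that $c$ is valued in $\field^{\times}$, which is precisely what makes $\star_c$ extend to $D$ and makes $c(x),c(y)$ central units of $\nu$-value $0$, so that $x^{2}+c(x)$ plays exactly the role that $1+x^{2}$ did in the original argument. Were $c$ allowed to take values in a noncentral subgroup of $D^{\times}$, this would break down and a different choice of symmetric elements would be needed.
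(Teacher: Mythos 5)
Your proposal is correct and follows essentially the same route as the paper, which proves this proposition by rerunning the proof of Proposition~\ref{prop:freemon} with modified generators $f=c(x^{-1})x+x^{-1}$ and $g=c(y^{-1})y+y^{-1}$; your choice $f=x+c(x)x^{-1}$, $g=y+c(y)y^{-1}$ differs from the paper's only by the central scalars $c(x),c(y)\in\field^{\times}$, so the valuation and leading-term bookkeeping you describe is exactly the intended adaptation.
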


Here we have called \emph{$c$-twisted canonical involution} the extension
to $D$ of the twisted involution induced by the canonical
involution on $G$ and $c$. For the proof we need slight modifications
of the free generators: we take $f=c(x^{-1})x+x^{-1}$ and $g=c(y^{-1})y+y^{-1}$
for the same $x$ and $y$.

\end{document}